\documentclass[preprint]{elsarticle}
\usepackage{amsmath,amssymb,hyperref,algorithmicx,algorithm,color,algpseudocode,tikz,soul,natbib,amsthm,subcaption}


\usepackage[final,authormarkup=none]{changes}
\definechangesauthor[color=blue]{Sriram}
\definechangesauthor[color=red]{Sauleh}
\newcommand{\sriAdd}[1]{\added[id=Sriram]{#1}}

\newcommand{\SSDel}[1]{\deleted[id=Sauleh]{#1}}

\def \mytitle {Sensitivity and Covariance in Stochastic Complementarity Problems with an Application to North American Natural Gas Markets}

\usepackage{amsthm}
\newtheorem{lemma}{Lemma}
\newtheorem{proposition}[lemma]{Proposition}
\newtheorem{theorem}[lemma]{Theorem}
\newtheorem{corollary}[lemma]{Corollary}
\theoremstyle{definition}
\newtheorem{definition}[lemma]{Definition}
\newtheorem{assumption}{Assumption}

\numberwithin{equation}{section}



\newcommand{\ie}			{\textit{i.e., }}
\DeclareMathOperator*{\cov}  {\mathcal{C}ov}

\DeclareMathOperator*{\expec}  {\mathbb{E}}

\DeclareMathOperator* {\sgn}{sgn}
\DeclareMathOperator* {\Max}{Maximize}
\DeclareMathOperator* {\Min}{Minimize}

\definecolor{brown}{HTML}{996633}
\definecolor{dgree}{HTML}{44AF2A}
\definecolor{blue}{HTML}{0000FF}

\newcommand{\code}[1]{\texttt{#1}}

\newcommand{\x}       {  {\mathbf{x}}}
\newcommand{\y}       {  {\mathbf{y}}}
\newcommand{\xstar}   {\textcolor{black}  {\x^*}}
\newcommand{\ystar}   {\textcolor{black}  {\y^*}}
\newcommand{\aaaa}    {  {\theta}}
\newcommand{\randa}   {  {\mathbf{\aaaa}}}
\newcommand{\Expa}    {\textcolor{black}  {\hat{\randa}}}
\newcommand{\f}       {\textcolor{black}  {\mathsf{f}}}
\newcommand{\mer}     {\textcolor{black}  {\psi}}
\newcommand{\Mer}     {\textcolor{black}  {\Phi}}
\newcommand{\gradxf}  {\textcolor{black}  {\nabla_{\x}\f}}

\newcommand{\hessxf}  {\textcolor{black}  {\nabla_{\x}^2\f}}
\newcommand{\Jacob}   {\textcolor{black}  {\mathcal{J}}}
\newcommand{\Hstar}   {\textcolor{black}  {\mathcal{H}}}
\newcommand{\C}       {\textcolor{black}  {\mathsf{C}}}

\newcommand{\perturb} {\textcolor{black}  {\Delta\randa}}
\newcommand{\T}       {\textcolor{black}  {\mathcal{T}}}
\newcommand{\cFi}     {\textcolor{black}  {\mathbf{F}}}
\newcommand{\cF}      {\textcolor{black}  {\mathbf{\cFi}}}

\newcommand{\nabPhi}  {\textcolor{black}  {\mathcal {M} }}
\newcommand{\sens}    {\textcolor{black}  {\beta }}

\newcommand{\Lipsh}	  {\textcolor{black} {\mathcal{L}(\xstar;\randa)}}

\newcommand{\randomxi}  {\textcolor{black}  {\omega}}

\renewcommand{\P}     			{				   {\text{P}}}
\renewcommand{\C}			{				   {\text{C}}}
\newcommand{\N}				{				   {\text{N}}}
\newcommand{\A}				{				   {\text{A}}}
\newcommand{\Y}				{				   {\text{Y}}}
\newcommand{\Q}				{				   {\text{Q}}}
\newcommand{\Ano}			{				   {\A_n^+}}
\newcommand{\Ani}			{				   {\A_n^-}}

\newcommand{\QpcyC}			{\textcolor{black} {\Q_{pcny}^{\C}}}
\newcommand{\QpyP}			{\textcolor{black} {\Q_{pny}^{\P}}}
\newcommand{\Qpay}			{\textcolor{black} {\Q_{pay}^{\A}}}
\newcommand{\Qa}			{\textcolor{black} {\Q_{ay}^{\A}}}

\newcommand{\Price}			{				   {\mathbf{\pi}}}
\newcommand{\piC}			{\textcolor{black}  {\mathbf{\Price_{cy}}}}
\newcommand{\piA}			{\textcolor{black}  {\mathbf{\Price^{\A}_{ay}}}}

\newcommand{\X}				{				   {\text{X}}} 
\newcommand{\Xp}[1][y]			{\textcolor{black} {\X^{\P}_{p{#1}}}}
\newcommand{\Xa}[1][y]			{\textcolor{black} {\X^{\A}_{a{#1}}}}
\newcommand{\CapP}			{\textcolor{black} {\text{CAP}^{\P}_{py}}}
\newcommand{\CapA}			{\textcolor{black} {\text{CAP}^{\A}_{ay}}}

\newcommand{\avl}			{\textcolor{black} {\alpha^{\P}}}
\newcommand{\Qpo}			{\textcolor{black} {\ensuremath{\hat{\Q}_{p0}}}}
\newcommand{\Qao}			{\textcolor{black} {\ensuremath{\hat{\Q}_{a0}}}}
\newcommand{\costP}			{\textcolor{black} {\mbox{Gol}}}
\newcommand{\costA}			{\textcolor{black} {\gamma_{ya}^{\A}(\randomxi)}}
\newcommand{\piXP}			{\textcolor{black} {\Price^{\X\P}_{py}(\randomxi)}}
\newcommand{\piXA}			{\textcolor{black} {\Price^{\X\A}_{ay}(\randomxi)}}
\renewcommand{\l}			{\textcolor{black} {l^{\P}_{py}(\randomxi)}}
\newcommand{\q}				{\textcolor{black} {q^{\P}_{py}(\randomxi)}}
\newcommand{\g}				{\textcolor{black} {g^{\P}_{py}(\randomxi)}}
\newcommand{\DemS}			{\textcolor{black} {D^{\C}_{cy}(\randomxi)}}
\newcommand{\DemI}			{\textcolor{black} {E^{\C}_{cy}(\randomxi)}}

\newcommand{\loss}			{\textcolor{black} {L}}
\newcommand{\lossP}			{\textcolor{black} {\loss^{\P}_{py}(\randomxi)}}
\newcommand{\lossA}			{\textcolor{black} {\loss^{\A}_{ay}(\randomxi)}}
\newcommand{\df}			{\textcolor{black} {\mathsf{df}_y(\randomxi) }}


\newcommand{\dab}			{\textcolor{black} {\delta^1_{py}}}
\newcommand{\dac}[1][y]		{\textcolor{black} {\delta^2_{p{#1}}}}
\newcommand{\dad}			{\textcolor{black} {\delta^3_{pny}}}

\newcommand{\dah}			{\textcolor{black} {\delta^5_{ay}}}
\newcommand{\dai}[1][y]		{\textcolor{black} {\delta^6_{a{#1}}}}

\title{\mytitle}
\date{}

\author[civil]{Sriram Sankaranarayanan\corref{cor1}}
\ead{ssankar5@jhu.edu}

\author[jgcri]{Felipe Feijoo}
\ead{felipe.feijoo@pnnl.gov}

\author[civil,math,diw]{Sauleh Siddiqui}
\ead{siddiqui@jhu.edu}

\address[civil]{Department of Civil Engineering, Johns Hopkins University}
\address[jgcri]{Pontificia Universidad Cat\'{o}lica de Valpara\'{i}so, Chile}
\address[math]{Department of Applied Mathematics and Statistics, Johns Hopkins University}
\address[diw]{German Institute for Economic Research (DIW Berlin)}

\cortext[cor1]{Corresponding Author: ssankar5@jhu.edu}

\biboptions{square,sort&compress}
\linespread{1.5}

\begin{document}

\begin{abstract}
	We provide an efficient method to approximate the covariance between decision variables {and uncertain parameters} in solutions to a general class of stochastic nonlinear complementarity problems. We also develop a sensitivity metric to quantify uncertainty propagation by determining  the change in the variance of the output due to a change in the variance of an input parameter. The covariance matrix of the solution variables quantifies the uncertainty in the output and pairs correlated variables and parameters. The sensitivity metric helps in identifying the parameters that cause maximum fluctuations in the output. The method developed in this paper optimizes the use of gradients and matrix multiplications which makes it particularly useful for large-scale problems. Having developed this method, we extend the deterministic version of the North American Natural Gas Model (NANGAM), to incorporate effects due to uncertainty in the parameters of the demand function, supply function, infrastructure costs, and investment costs. We then use the sensitivity metrics to identify the parameters that impact the equilibrium the most.
\end{abstract}
\begin{keyword}
Stochastic Programming\sep Large scale optimization \sep Complementarity problems \sep Approximation methods	
\end{keyword}
\maketitle

\section{Introduction}
Complementarity models arise naturally out of various real life problems. A rigorous survey of their application is available in \cite{Ferris1997}. Authors in \cite{Martin2014,huppmann2014market,Feijoo,oke2016mitigating,Abada2013,christensen2015mixed} use complementarity problems to model markets from a game theoretic perspective \citep{siddiqui2016determining,anderson2004nash}, where the complementarity conditions typically arise between the marginal profit and the quantity produced by the producer. In the field of mechanics, they typically arise in the context of frictional contact problems \citep{kwak1988complementarity}, where there is a complementarity relation between the frictional force between a pair of surfaces and the distance of separation between them. {With a wide range of applications, understanding the characteristics of solutions to complementarity problems becomes important for advancing the field. In this paper, we focus on studying the characteristics of solutions to complementarity problems under uncertainty.} 

The behavior of a solution to a complementarity problem with random parameters was first addressed in \cite{Gurkan1999}, where such problems were referred to as stochastic complementarity problems (SCP). Authors in \cite{Shanbhag2013a,Chen2005expected,Gabriel2009,Egging2016,Jiang2008} define various formulations of SCP for different applications and have devised algorithms to solve the problem. Authors in \cite{Lamm2016} compute confidence intervals for solution of the expected value formulation of the problem, however they do not have efficient methods to find the second-order statistics for large-scale complementarity problems.

Large-scale problems, those with over 10,000 decision variables and uncertain parameters arise naturally out of detailed market models and there is considerable interest in studying, understanding and solving such models. For example, \cite{chen2017dynamic} discuss a case of urban drainage system with large number of variables. \cite{yumashev2017flexible} discuss a case of deciding under large-scale nuclear emergencies. In line with the area of application used in this paper, \cite{Gabriel2001} discuss a case of an energy model with large number of variables and parameters. Naturally, developing methods to solve such large-scale problems gained interest. {Authors in} \cite{kopanos2010mip,doi:10.1287/opre.2015.1413} discuss various tools ranging from mathematical techniques (decomposition based) to computational techniques (parallel processing) for solving large-scale optimization problems. \cite{ohno2016approx} uses an approximate algorithm for a large-scale Markov decision process to optimize production and distribution systems. In this paper, we do not present a new method to solve stochastic complementarity problems, but an efficient algorithm to generate second-order information that is flexible enough to be coupled with any existing algorithm that provides a first-order solution.

The objective of this paper is to efficiently obtain second-order statistical information about solution vectors of large-scale stochastic complementarity problems. This gives us information about variability of the equilibrium obtained by solving a nonlinear complementarity problem (NCP) and the correlation between various variables in the solution. {Authors in }\cite{hyett2007valuing} and \cite{cecchi2003} provide examples in the area of clinical pathways and ecology respectively{,} about the utility of understanding the variance of the solution in addition to the mean. They also show that a knowledge of variance aids better understanding and planning of the system. \cite{Agrawal2012} emphasize the necessity to understand covariance as a whole rather than individual variances by quantifying ``the loss incurred on ignoring correlations'' in a stochastic programming model. 

In addition, we also introduce a sensitivity metric which quantifies the change in uncertainty in the output due to a perturbation in the variance of uncertain input parameters. This helps us to directly compare input parameters by the amount of uncertainty they propagate to the solution.

In attaining the above objectives, the most computationally expensive step  is to solve a  system of linear equations. We choose approximation methods over analytical methods, integration, or Monte Carlo simulation because of the computational hurdle involved while implementing those methods for large-scale problems. The method we describe in this paper achieves the following:
\begin{itemize}
	\item The most expensive step has to be performed just once, irrespective of the covariance of the input parameters. Once the linear system of equations is solved, for each given covariance scenario, we only perform two matrix multiplications.
    \item Approximating the covariance matrix and getting a sensitivity metric can be obtained by solving the {above mentioned}  linear system just once.
\end{itemize}

The methods developed in this paper can also be used for nonlinear optimization problems with linear equality constraints. We prove stronger results on error bounds for special cases of quadratic programming.

Having developed this method, we apply it to a large-scale stochastic natural gas model for North America, an extension of the deterministic model developed {in} \cite{Feijoo} and determine the covariance of the solution variables. We then proceed to identify the parameters which have the greatest impact on the solution. A Python class for efficiently storing and operating on sparse arrays of dimension greater than two is created. This is useful for working with high-dimensional problems which have an inherent sparse structure in the gradients.

We divide the paper as follows. Section \ref{sec:MathForm} formulates the problem and mentions the assumptions used in the paper. It then develops the algorithm used to approximate the solution covariance and provides proofs for bounding the error. Section \ref{sec:total_sensitivity} develops a  framework to quantify the sensitivity of the solution to each of the random variables. Section \ref{sec:application_to_unconstrained_minimization} shows how the result can be applied for certain optimization problems with equality constraints. Having obtained the theoretical results, section \ref{Sec:Duopoly} gives an example of a oligopoly where this method can be applied and compares the computational time of the approximation method with a Monte-Carlo method showing the performance improvement for large-scale problems. Section \ref{sec:NANGAM} describes the Natural Gas Model to which the said method is applied. Section \ref{sec:conclusion_and_future_work} discusses the possible enhancements for the model and its limitations in the current form.

\section{Approximation of covariance} 
\label{sec:MathForm}
For the rest of the paper, all bold quantities are vectors. A subscript $i$ for those quantities refer to the $i$-th component of the vector in Cartesian representation.
\subsection{Definitions} 
We define a complementarity problem and a stochastic complementarity problem which are central to the results obtained in this paper. We use a general definition of complementarity problems and stochastic complementarity problems as stated below.
\begin{definition}
 \citep{facchinei2007finite} Given $\cF:\mathbb{R}^{n\times m}\mapsto \mathbb{R}^n$, and parameters $\randa\in \mathbb{R}^m$, the \emph{parametrized nonlinear complementarity problem} (NCP) is to find $\x\in \mathbb{R}^n$ such that
\begin{align}
\mathbb{K}\ni \x \perp \cF(\x;\randa) \in \mathbb{K}^*\label{eq:NCP}
\end{align}
	where $\mathbb{K}^*$, the dual cone of $\mathbb{K}$ is defined as
	\begin{align}
	\mathbb{K}^* \quad &= \quad \left\{ \x \in \mathbb{R}^n : \mathbf{v}^T\x \geq 0 \quad\forall \mathbf{v}\in \mathbb{K} \right\}
	\end{align}
\end{definition}

\begin{definition}\label{def:SCP}
	Given a cone $\mathbb{K}\in \mathbb{R}^n$ a random function $\cF:\mathbb{K}\times\Omega \mapsto \mathbb{R}^n$, the stochastic complementarity problem (SCP) is to find $x\in \mathbb{R}^n$ such that
	\begin{align}
	 \mathbb{K}\ni \x \perp \expec\cF(\x;\randomxi) \in \mathbb{K}^*\label{eq:SCP}
	\end{align}
\end{definition} 

We assume that we can explicitly evaluate the expectation in \eqref{eq:SCP} using its functional form, and that the SCP can be solved using an existing algorithm.

We now make assumptions on the form of $\mathbb{K}$ in \eqref{eq:NCP}. This form of $\mathbb{K}$ helps in establishing an equivalence between a complementarity problem and a minimization problem which is key to derive the approximation method in this paper. 
\begin{assumption}\label{as:cone}
 $\mathbb{K}$ in \eqref{eq:NCP} is a Cartesian product of half spaces and full spaces, \ie for some $\mathcal{I}\subseteq \left\{ 1,2,\ldots,n \right\}$
\begin{align}
\mathbb{K} \quad &= \quad \left\{ \x\in \mathbb{R}^n: \quad \x_i \geq 0	\quad \mbox{if }i \in \mathcal{I} \right\}
\end{align}
\end{assumption}

We now propose a lemma about the form of the dual cone of $\mathbb{K}$ to understand the special form that it has. This will help us convert the complementarity problem into an unconstrained minimization problem.
\begin{lemma}\label{prop:dual}
	The dual cone $\mathbb{K}^*$ of the set assumed in assumption \ref{as:cone} is
	\begin{align}
	\mathbb{K}^* = \mathbb{K}' &= \left\{ \x \in \mathbb{R}^n:
	\begin{array}{lr}
		\x_i\geq 0 &\mbox{ if }i \in \mathcal{I}\\
		\x_i = 0 &\mbox{ if }i \not\in \mathcal{I}\\
	\end{array}
	 \right\}
	\end{align}
\end{lemma}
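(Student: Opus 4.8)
The plan is to prove the set equality $\mathbb{K}^* = \mathbb{K}'$ by establishing the two inclusions $\mathbb{K}' \subseteq \mathbb{K}^*$ and $\mathbb{K}^* \subseteq \mathbb{K}'$ separately, in each case reasoning component-by-component and exploiting the Cartesian-product structure of $\mathbb{K}$. The underlying idea is that $\mathbb{K}$ is a product of one-dimensional factors --- a half-line $[0,\infty)$ for each coordinate $i \in \mathcal{I}$ and the whole real line for each $i \not\in \mathcal{I}$ --- and the dual of such a product cone is the product of the duals of the factors (the dual of $[0,\infty)$ being $[0,\infty)$, and the dual of $\mathbb{R}$ being $\{0\}$). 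I would make this concrete through explicit inner-product computations rather than invoking the product rule abstractly.

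For the inclusion $\mathbb{K}' \subseteq \mathbb{K}^*$, I would take an arbitrary $\x \in \mathbb{K}'$ and an arbitrary $\mathbf{v} \in \mathbb{K}$, and expand $\mathbf{v}^T \x = \sum_{i=1}^n \mathbf{v}_i \x_i$. The terms with $i \not\in \mathcal{I}$ drop out because $\x_i = 0$ there, and the remaining terms with $i \in \mathcal{I}$ are each nonnegative because $\x_i \geq 0$ (membership in $\mathbb{K}'$) and $\mathbf{v}_i \geq 0$ (membership in $\mathbb{K}$). Hence $\mathbf{v}^T \x \geq 0$ for every $\mathbf{v} \in \mathbb{K}$, so $\x \in \mathbb{K}^*$ by the definition of the dual cone.

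For the reverse inclusion $\mathbb{K}^* \subseteq \mathbb{K}'$, I would take $\x \in \mathbb{K}^*$ and test it against well-chosen vectors in $\mathbb{K}$, namely the standard basis vectors $\mathbf{e}_i$ and their negatives. For $i \in \mathcal{I}$, the vector $\mathbf{e}_i$ lies in $\mathbb{K}$, so $\mathbf{e}_i^T \x = \x_i \geq 0$, which is the required sign condition. For $i \not\in \mathcal{I}$, both $\mathbf{e}_i$ and $-\mathbf{e}_i$ lie in $\mathbb{K}$ (the $i$-th coordinate is unconstrained), so applying the defining inequality of $\mathbb{K}^*$ to each gives $\x_i \geq 0$ and $-\x_i \geq 0$ simultaneously, forcing $\x_i = 0$. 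Together these show $\x \in \mathbb{K}'$, completing the equality.

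The argument is essentially routine, so I do not anticipate a genuine obstacle; the one point demanding care is verifying that the chosen test vectors actually belong to $\mathbb{K}$ before invoking the defining inequality of $\mathbb{K}^*$ --- in particular checking that $-\mathbf{e}_i \in \mathbb{K}$ precisely when $i \not\in \mathcal{I}$. This is exactly what pins down the equality constraint $\x_i = 0$ on the complementary index set and distinguishes $\mathbb{K}'$ from $\mathbb{K}$ itself.
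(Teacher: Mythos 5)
Your proof is correct and follows essentially the same route as the paper: the forward inclusion $\mathbb{K}'\subseteq\mathbb{K}^*$ is verified by the identical term-by-term expansion of $\mathbf{v}^T\x$, and the reverse inclusion uses the same test vectors $\pm\mathbf{e}_i$ (the paper merely phrases that half contrapositively, picking a single violating coordinate $j$ and the vector $-\sgn(\x_j)\mathbf{e}_j$, rather than testing all coordinates directly). No gaps; the one point you flag --- checking the test vectors actually lie in $\mathbb{K}$ --- is handled correctly.
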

\begin{proof}
	Check Appendix A
\end{proof}

\subsection{Preliminaries for approximation} 
{In this subsection, we prove two preliminary results. Firstly, we prove our ability to pose an NCP as an unconstrained minimization problem. Then we prove results on twice continuous differentiability of the objective function, thus enabling us to use the rich literature available for smooth unconstrained minimization. Following that, propositions \ref{prop:uncminNess} and \ref{prop:uncminSuff} help in achieving the former while proposition \ref{prop:diff} along with its corollaries help us in achieving the latter. }
    
    We now define C-functions, which are central to pose the complementarity problem into an unconstrained optimization problem. The equivalent formulation as an unconstrained optimization problem assists us in developing the algorithm.
\begin{definition}\label{def:merit}\citep[Pg. ~72]{facchinei2007finite}
A function $\mer: \mathbb{R}^2\mapsto \mathbb{R}$ is a \emph{C-function} when
\begin{align}
\begin{split}
\mer(x,y)= 0 \qquad &\Leftrightarrow \qquad  x\geq 0 \quad y\geq 0 \quad xy=0
\end{split}\label{eq:CFunction}
\end{align}
We consider the following commonly used C-functions.
\begin{align}
\mer_{FB}(x,y)\quad &= \quad \sqrt{x^2+y^2}-x-y\label{eq:FisBurm}\\
\mer_{min}(x,y) \quad &= \quad \min(x,y)\label{eq:Min}
\end{align}
\end{definition}

Under our assumptions on $\mathbb{K}$, the following two propositions establish the equivalence of the complementarity problem and an unconstrained minimization problem. 

\begin{proposition}
Suppose assumption \ref{as:cone} holds. Then every solution $\xstar(\randa)$ of the parameterized complementarity problem in \eqref{eq:NCP}, is a global minimum of the following function $\f(\x;\randa)$,
	\begin{align}
	\Mer_i(\x,\randa;\cF)\quad &= \quad \left\{ \begin{array}{ll}
	\cF_i(\x,\randa)&\qquad\mbox{if }\quad i\not\in \mathcal{I}\\
	\mer_i(\x_i,\cF_i(\x,\randa))&\qquad\mbox{if }\quad i \in \mathcal{I}
\end{array} \right . \label{eq:exmer1}\\
\f(\x;\randa)\quad &= \quad \frac{1}{2}\|\Mer(\x;\randa;\cF)\|_2^2 \label{eq:exmer2}
	\end{align}
	with an objective value 0, for some set of not necessarily identical C-functions $\mer_i$.\label{prop:uncminNess}
\end{proposition}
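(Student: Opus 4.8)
The plan is to reduce the cone-complementarity condition in \eqref{eq:NCP} to a collection of componentwise scalar conditions, show that these are precisely what make each $\Mer_i$ vanish, and then conclude that $\f$ attains its global minimum value of $0$ at any solution $\xstar(\randa)$.

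First I would unpack what it means for $\xstar$ to solve \eqref{eq:NCP} under Assumption \ref{as:cone}. The relation $\mathbb{K}\ni \xstar \perp \cF(\xstar;\randa)\in\mathbb{K}^*$ encodes three facts: $\xstar\in\mathbb{K}$, $\cF(\xstar;\randa)\in\mathbb{K}^*$, and the orthogonality $\xstar^T\cF(\xstar;\randa)=0$. Invoking the explicit description of $\mathbb{K}$ from Assumption \ref{as:cone} and of $\mathbb{K}^*$ from Lemma \ref{prop:dual}, membership translates into: $\xstar_i\geq 0$ for $i\in\mathcal{I}$; $\cF_i(\xstar;\randa)\geq 0$ for $i\in\mathcal{I}$; and $\cF_i(\xstar;\randa)=0$ for $i\notin\mathcal{I}$.

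The key step is then to promote the single aggregate equation $\xstar^T\cF(\xstar;\randa)=0$ to componentwise complementarity. Writing the inner product as $\sum_i \xstar_i\cF_i(\xstar;\randa)$, I would observe that every summand is nonnegative: for $i\in\mathcal{I}$ both factors are nonnegative by the membership conditions above, while for $i\notin\mathcal{I}$ the factor $\cF_i$ is zero. A sum of nonnegative terms equal to $0$ forces each term to vanish, so $\xstar_i\cF_i(\xstar;\randa)=0$ for every $i$. In particular, for each $i\in\mathcal{I}$ the triple $\xstar_i\geq 0$, $\cF_i(\xstar;\randa)\geq 0$, $\xstar_i\cF_i(\xstar;\randa)=0$ is exactly the right-hand side of the defining equivalence \eqref{eq:CFunction}, so any C-function yields $\mer_i(\xstar_i,\cF_i(\xstar;\randa))=0$; for $i\notin\mathcal{I}$ we already have $\Mer_i=\cF_i(\xstar;\randa)=0$ directly from the definition \eqref{eq:exmer1}. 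Hence $\Mer(\xstar;\randa;\cF)=\mathbf{0}$ componentwise, and by \eqref{eq:exmer2} we get $\f(\xstar;\randa)=0$.

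Finally, because $\f=\frac{1}{2}\|\Mer\|_2^2$ is a squared norm, it is nonnegative for every argument, so the value $0$ attained at $\xstar$ is a global minimum; this argument also goes through for \emph{any} admissible choice of C-functions $\mer_i$, which a fortiori justifies the ``for some'' in the statement. I do not anticipate a genuine obstacle here: the only place requiring care is the passage from the aggregate orthogonality to componentwise complementarity, which hinges entirely on having the matching cone and dual-cone sign patterns supplied by Lemma \ref{prop:dual}, so that no summand can be negative. The C-function equivalence \eqref{eq:CFunction} then does the rest automatically.
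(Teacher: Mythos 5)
Your proposal is correct and follows essentially the same route as the paper's own proof: translate membership in $\mathbb{K}$ and $\mathbb{K}^*$ (via Lemma \ref{prop:dual}) into componentwise sign conditions, use the orthogonality $\xstar^T\cF(\xstar;\randa)=0$ to force componentwise complementarity, invoke the defining equivalence \eqref{eq:CFunction} to kill each $\Mer_i$, and conclude from nonnegativity of $\f$. The only difference is cosmetic: you spell out the ``sum of nonnegative terms equal to zero forces each term to vanish'' step that the paper leaves implicit, which is a welcome bit of extra care but not a different argument.
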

\begin{proof}
	Check Appendix A
\end{proof}

\begin{proposition}\label{prop:uncminSuff}
	Suppose assumption \ref{as:cone} holds. If a solution to the problem in \eqref{eq:NCP} exists and $\xstar(\randa)$ is an unconstrained global minimizer of $\f(\x;\randa)$ defined in \eqref{eq:exmer2}, then $\xstar(\randa)$ solves the complementarity problem in \eqref{eq:NCP}.
\end{proposition}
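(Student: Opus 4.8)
The plan is to exploit the nonnegativity of $\f$ together with the existence hypothesis to pin down the global minimum value of $\f$ as exactly zero, and then convert the vanishing of $\f$ at the minimizer into the complementarity conditions via the defining property of C-functions. First I would note that $\f(\x;\randa)=\frac{1}{2}\|\Mer(\x;\randa;\cF)\|_2^2\geq 0$ for every $\x$, so $0$ is a lower bound for $\f$. Invoking the existence hypothesis, let $\xbar$ be any solution of \eqref{eq:NCP}; by Proposition \ref{prop:uncminNess} we have $\f(\xbar;\randa)=0$. Hence the minimum of $\f$ is attained and equals $0$, so the global minimum value is precisely $0$.

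Since $\xstar(\randa)$ is by hypothesis an unconstrained global minimizer, it must achieve this value, giving $\f(\xstar;\randa)=0$. Because $\f$ is half the squared Euclidean norm of $\Mer$, this forces $\Mer_i(\xstar,\randa;\cF)=0$ for every component $i$. I would then read off the complementarity relations componentwise: for $i\notin\mathcal{I}$, the definition \eqref{eq:exmer1} gives $\Mer_i=\cF_i(\xstar,\randa)=0$, matching the requirement $\cF_i=0$ in the dual cone $\mathbb{K}^*$ identified in Lemma \ref{prop:dual}; for $i\in\mathcal{I}$, we have $\Mer_i=\mer_i(\xstar_i,\cF_i(\xstar,\randa))=0$, and the defining equivalence \eqref{eq:CFunction} of a C-function yields $\xstar_i\geq 0$, $\cF_i(\xstar,\randa)\geq 0$, and $\xstar_i\cF_i(\xstar,\randa)=0$. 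Collecting these across all $i$ gives $\xstar\in\mathbb{K}$, $\cF(\xstar,\randa)\in\mathbb{K}^*$, and $\xstar^T\cF(\xstar,\randa)=0$, which is exactly \eqref{eq:NCP}.

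The main obstacle is subtle rather than computational, and it is precisely where the existence hypothesis earns its place. The conclusion $\f(\xstar;\randa)=0$ does \emph{not} follow merely from $\xstar$ being a global minimizer; it follows only because a genuine solution $\xbar$ certifies that the minimum value of $\f$ is $0$ rather than some strictly positive number. If no solution existed, a global minimizer of $\f$ could attain a positive value, and the C-function equivalence \eqref{eq:CFunction} could not be invoked to extract the sign and orthogonality conditions. Thus the crux of the argument is establishing that the global minimum of $\f$ equals zero, and this is exactly the step that consumes both the existence assumption and Proposition \ref{prop:uncminNess}.
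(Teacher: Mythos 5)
Your proposal is correct and follows essentially the same route as the paper's own proof: use the existence hypothesis together with Proposition \ref{prop:uncminNess} to certify that the global minimum value of $\f$ is zero, conclude that every component of $\Mer$ vanishes at $\xstar$, and then read off $\xstar\in\mathbb{K}$, $\cF(\xstar;\randa)\in\mathbb{K}^*$, and $\xstar^T\cF(\xstar;\randa)=0$ from the C-function equivalence and Lemma \ref{prop:dual}. Your closing remark correctly identifies why the existence assumption is indispensable, which is exactly the role it plays in the paper's argument.
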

\begin{proof}
	Check Appendix A
\end{proof}

Now given a function $\cF$, and a set $\mathbb{K}$ which satisfies assumption \ref{as:cone}, and a solution of the NCP $\xstar(\Expa)$ for some fixed $\randa = \Expa$, we define a vector valued function $\Mer:\mathbb{R}^{n\times m}\mapsto \mathbb{R}^n$ component-wise as follows.
\begin{align}
\Mer_i(\x,\randa;\cF)\quad &= \quad \left\{ \begin{array}{ll}
	\cF_i(\x,\randa)&\qquad\mbox{if }\quad i\not\in \mathcal{I}\\
	\mer^2(\x_i,\cF_i(\x,\randa))&\qquad\mbox{if }\quad i\in \mathcal{Z}\\
	\mer(\x_i,\cF_i(\x,\randa))&\qquad\mbox{otherwise}
\end{array} \right .\\
\f(\x;\randa)\quad &= \quad \frac{1}{2}\|\Mer(\x;\randa;\cF)\|_2^2 \label{eq:f}\\
\mathcal{Z}\quad &= \quad \left\{ i\in \mathcal{I}: \xstar_i(\Expa) = \cFi_i(\xstar(\Expa);\Expa) =0 \right\}
\end{align}
Note that if $\mer$ is a C-function, $\mer^2$ is also a C-function since $\mer^2 = 0 \iff \mer = 0$. We observe from propositions \ref{prop:uncminNess} and \ref{prop:uncminSuff} that  minimizing $\f(\x;\randa)$ over $\x$  is equivalent to solving the NCP in \eqref{eq:NCP}.

Now we assume conditions on the smoothness of $\cF$ so that the solution to a perturbed problem is sufficiently close to the original solution.
\begin{assumption}\label{as:differen}
	$\cF(\x;\randa)$ is twice continuously differentiable in $\x$ and $\randa$ over an open set containing $\mathbb{K}$. 
\end{assumption}

Given that the rest of the analysis is on the function $\f(\x;\randa)$ defined in \eqref{eq:f}, we prove that a sufficiently smooth $\cF$ and a suitable $\mer$ ensure a sufficiently smooth $\f$.
\begin{proposition} \label{prop:diff}
	With assumption \ref{as:differen} holding, we state $\f(\x;\Expa)$ defined as in \eqref{eq:f} is a twice continuously differentiable at $\x$ satisfying $\f(\x;\Expa)=0$ for any C function $\mer$ provided
	\begin{enumerate}
		\item $\mer$ is twice differentiable at $\left\{ (a,b)\in \mathbb{R}^2: \mer(a,b)=0 \right\} \setminus  \{(0,0)\}$ with a finite derivative and finite second derivative.
		\item $\mer$ vanishes sufficiently fast near the origin. \ie \begin{align}
		\lim_{(a,b)\rightarrow(0,0)} \mer^2(a,b) \frac{\partial^2 \mer(a,b) }{\partial a\partial b} \quad &= \quad 0
		\end{align}
	\end{enumerate}
\end{proposition}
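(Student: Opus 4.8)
The plan is to exploit the additive-then-composite structure of $\f$. Since $\f(\x;\Expa)=\tfrac12\sum_{i}\Mer_i(\x;\Expa;\cF)^2$ and finite sums and products of twice continuously differentiable functions are again twice continuously differentiable, it suffices to show that each summand $\tfrac12\Mer_i^2$ is twice continuously differentiable at the point of interest $\xstar\equiv\xstar(\Expa)$, where $\f=0$ forces every $\Mer_i=0$. Writing $a:=\x_i$ and $b:=\cF_i(\x;\randa)$, both twice continuously differentiable in $(\x,\randa)$ by Assumption \ref{as:differen} (the first trivially), each summand is a composition of a function of $(a,b)$ with these smooth maps; because compositions of twice continuously differentiable maps inherit the property, the whole question reduces to the regularity of $\mer$, $\mer^2$, or $\mer^4$ as a function of $(a,b)$ near the value that $(a,b)$ takes at $\xstar$. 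I would therefore partition the indices by where this value lands.

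For $i\notin\mathcal{I}$ we have $\Mer_i=\cF_i$, twice continuously differentiable by Assumption \ref{as:differen}, so the summand is immediately smooth. For $i\in\mathcal{I}\setminus\mathcal{Z}$, the definition of $\mathcal{Z}$ guarantees that at $\xstar$ the pair $(a,b)=(\xstar_i,\cF_i(\xstar;\Expa))$ is a zero of $\mer$ distinct from the origin; the first hypothesis then supplies a twice continuously differentiable $\mer$ with finite first and second derivatives near that point, and composing with the smooth maps $a,b$ makes the summand twice continuously differentiable. These two cases contribute no difficulty.

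The real content is the degenerate set $i\in\mathcal{Z}$, where $(a,b)=(0,0)$ at $\xstar$ and $\Mer_i=\mer^2$, so the summand equals $\tfrac12\mer^4(a,b)$ while $\mer$ itself need not be differentiable at the origin (as with $\mer_{FB}$). Here I would argue by a limiting/extension principle: on a punctured neighborhood of the origin $\mer$ is smooth, so I can compute $\nabla\f$ and $\nabla^2\f$ there by the chain and product rules, obtaining a continuous factor (derivatives of $a=\x_i$ and $b=\cF_i$) multiplying terms of two types, namely $\mer^2$ times products of first derivatives of $\mer$ and $\mer^3$ times second derivatives of $\mer$. I would then show every such term extends continuously to the origin with limit zero: the fourth power (equivalently the squaring built into $\Mer_i$ on $\mathcal{Z}$) supplies enough factors of $\mer\to 0$ to annihilate the first-derivative products, while the one genuinely singular term $\mer^3\frac{\partial^2\mer}{\partial a\,\partial b}$ is killed by the second hypothesis, since $\mer^3\frac{\partial^2\mer}{\partial a\,\partial b}=\mer\cdot\bigl(\mer^2\frac{\partial^2\mer}{\partial a\,\partial b}\bigr)\to 0$; the pure second-derivative terms $\mer^3\frac{\partial^2\mer}{\partial a^2}$ and $\mer^3\frac{\partial^2\mer}{\partial b^2}$ are controlled by the same estimate. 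Once all first and second partials admit continuous extensions to the origin, the standard fact that a function which is continuous on a neighborhood, twice continuously differentiable off a single point, and whose partials up to order two extend continuously must be twice continuously differentiable there completes the argument.

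The main obstacle, and the step I would spend the most care on, is precisely this degenerate case: because $\mer$ is not differentiable at the origin one cannot apply the chain rule there directly, and the expressions for the second partials live only on the punctured neighborhood, so I must verify that their limits \emph{exist and are direction-independent} rather than merely bounded. This is exactly where the two hypotheses do their work --- the second hypothesis is the statement that the dangerous mixed second-derivative contribution vanishes in the limit, and the squaring in $\Mer_i$ on $\mathcal{Z}$ is what guarantees that the first-order cross terms (which for $\mer_{FB}$ possess no directional limit at lower powers of $\mer$) also vanish. I would also take care to first establish continuity of the first partials (so that $\f$ is $C^1$) before invoking the second-derivative limit argument, and to note that restricting to points with $\f(\x;\Expa)=0$ is what places the base point at a genuine zero of each $\mer$, legitimizing the limits taken toward the origin.
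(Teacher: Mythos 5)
Your proposal is correct and follows essentially the same route as the paper: decompose $\f$ into its summands, split the indices into $i\notin\mathcal{I}$, $i\in\mathcal{Z}$, and $i\in\mathcal{I}\setminus\mathcal{Z}$, invoke hypothesis 1 for the nondegenerate zeros, and use the extra squaring on $\mathcal{Z}$ together with hypothesis 2 to kill the $\mer^3\frac{\partial^2\mer}{\partial a\partial b}$ term at the origin. The only difference is one of care rather than substance: you explicitly frame the $\mathcal{Z}$ case as a continuous-extension argument from a punctured neighborhood (since the chain rule is unavailable at $(0,0)$), whereas the paper differentiates formally and sets $\mer_i=0$; your version is the more rigorous rendering of the same computation.
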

\begin{proof}
	Given that $\f$ is a sum of squares, it is sufficient to prove each term individually is twice continuously differentiable to prove the theorem. Also since we are only interested where $\f$ vanishes, it is sufficient to prove the above property for each term where it vanishes.\\
Consider terms from $i\not \in \mathcal{I}$. Since $\cFi_i$ is twice continuously differentiable, $\cFi_i^2$ is twice continuously differentiable too.\\
Consider the case $i\in \mathcal{Z}$. This means $i\in \mathcal{I} $ and $ \xstar_i(\Expa) = \cFi_i \left( \xstar_i(\Expa),\Expa \right)  = 0 $. These contribute a $\mer^4$ term to $\f$. With the notation $\mer_i \equiv \mer(\x_i,\cFi_i(\x;\randa))$, and $\delta_{ij} = 1 \iff i=j$ and $0$ otherwise we clearly have,
\begin{align}
\frac{\partial \mer^4_i}{\partial \x_j}  \quad &= \quad 4\mer^3_i \left( \frac{\partial \mer_i}{\partial a}\delta_{ij} + \frac{\partial \mer_i}{\partial b} \frac{\partial \cFi_i}{\partial \x_j}    \right) \quad = \quad 0\\
\frac{\partial^2 \mer^4_i}{\partial \x_j\x_k}  \quad &= \quad  12\mer^2_i \left(\frac{\partial \mer_i}{\partial a}\delta_{ij} + \frac{\partial \mer_i}{\partial b}\frac{\partial \cFi_i}{\partial \x_j}  \right)\left(\frac{\partial \mer_i}{\partial a}\delta_{ik} + \frac{\partial \mer_i}{\partial b}\frac{\partial \cFi_i}{\partial \x_k}  \right) \nonumber\\%
\quad & \qquad \quad + 4\mer^3_i \left( \frac{\partial^2 \mer_i}{\partial a^2}\delta_{ij}\delta_{ik} + \frac{\partial^2 \mer_i}{\partial a\partial b}\frac{\partial \cFi_i}{\partial \x_k} + \mbox{ other terms }    \right) \\
\quad &= \quad 0
\end{align}
For the third case, $i\in \mathcal{I}\setminus \mathcal{Z}$, we have
\begin{align}
\frac{\partial \mer_i^2}{\partial \x_j}\quad &= \quad 2\mer_i \left(\frac{\partial\mer_i }{\partial a}\delta_{ij} +\frac{\partial \mer_i}{\partial b}\frac{\partial \cFi_i}{\partial \x_j}   \right) \quad = \quad 0\\
\frac{\partial^2\mer_i^2}{\partial \x_j\x_k}\quad &= \quad  \left(\frac{\partial\mer_i }{\partial a}\delta_{ij} +\frac{\partial \mer_i}{\partial b}\frac{\partial \cFi_i}{\partial \x_j}   \right) \nonumber\\
\quad & \qquad \quad + 2\mer_i \left( \frac{\partial^2 \mer_i}{\partial a^2}\delta_{ij}\delta_{ik} + \frac{\partial^2 \mer_i}{\partial a\partial b}\frac{\partial \cFi_i}{\partial \x_k} + \mbox{ other terms }    \right) \\
\quad &= \quad \frac{\partial\mer_i }{\partial a}\delta_{ij} +\frac{\partial \mer_i}{\partial b}\frac{\partial \cFi_i}{\partial \x_j}
\end{align}
Continuity of $\f$ at the points of interest follow the continuity of the individual terms at the points.
\end{proof}

The following corollaries show the existence of  C-functions $\mer$ which satisfy the hypothesis of proposition \ref{prop:diff}. 
\begin{corollary}\label{corr:mindiff}
	With assumption \ref{as:differen} holding, for the choice of C-function $\mer = \mer_{min}$ defined in \eqref{eq:Min}, the function $\f$ is twice continuously differentiable at its zeros.
\end{corollary}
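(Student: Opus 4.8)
The plan is to invoke proposition \ref{prop:diff} directly: it suffices to check that $\mer_{min}(a,b)=\min(a,b)$ satisfies the two hypotheses of that proposition, after which twice continuous differentiability of $\f$ at its zeros is immediate. As a preliminary, I would describe the zero set of $\mer_{min}$. Since $\mer_{min}$ is a C-function, $\min(a,b)=0$ is equivalent to $a\ge 0$, $b\ge 0$, $ab=0$, so the zero set is the union of the two nonnegative semi-axes $\{(a,0):a\ge 0\}\cup\{(0,b):b\ge 0\}$. The only set on which $\min$ fails to be smooth is the diagonal $\{a=b\}$, and this meets the zero set only at the origin.

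Next I would verify hypothesis (1). Fix a zero of $\mer_{min}$ other than the origin, say $(0,b_0)$ with $b_0>0$ (the case $(a_0,0)$ is symmetric). In a small neighborhood we have $a<b$, hence $\min(a,b)=a$ locally, which is $C^\infty$ with first partials $(\partial_a,\partial_b)=(1,0)$ and all second partials equal to $0$; in particular the derivative and second derivative are finite. Thus hypothesis (1) holds at every nonzero zero of $\mer_{min}$.

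Then I would verify hypothesis (2). Away from the diagonal $\{a=b\}$, $\min(a,b)$ coincides with one of the coordinate projections, so the mixed partial $\partial^2\mer_{min}/\partial a\,\partial b$ vanishes identically wherever it is defined. Consequently the product $\mer_{min}^2(a,b)\,\partial^2\mer_{min}(a,b)/\partial a\,\partial b$ is identically $0$ on a punctured neighborhood of the origin (off the diagonal), so its limit as $(a,b)\to(0,0)$ is $0$. With both hypotheses established, proposition \ref{prop:diff} yields the claim.

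The main obstacle is purely the nonsmoothness of $\min$ along the diagonal $a=b$; the argument turns on the observation that this bad set is disjoint from the zero set except at the origin (which is handled by hypothesis (1) being required only away from the origin) and that the mixed second partial vanishes off the diagonal (which is what makes hypothesis (2) hold at the origin). Once this geometric separation is noted, every required derivative computation reduces to that of a coordinate projection, so nothing more delicate is needed.
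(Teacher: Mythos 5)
Your proposal is correct and follows essentially the same route as the paper's own proof: identify the zero set of $\mer_{min}$ as the nonnegative semi-axes, note that away from the origin $\min$ coincides locally with a coordinate projection (so hypothesis (1) of Proposition \ref{prop:diff} holds with vanishing second derivatives), and observe that the mixed partial vanishes off the diagonal so the limit in hypothesis (2) is zero. Your write-up is simply a more detailed version of the paper's argument.
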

\begin{corollary}\label{corr:FBdiff}
	With assumption \ref{as:differen} holding, for the choice of C-function $\mer = \mer_{FB}$ defined in \eqref{eq:FisBurm}, the function $\f$ is twice continuously differentiable.
\end{corollary}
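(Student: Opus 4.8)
The plan is to obtain the result in two stages: first invoke Proposition \ref{prop:diff} to get twice continuous differentiability of $\f$ at the zeros, and then upgrade this to a full neighborhood of the solution by exploiting that $\mer_{FB}$ is $C^\infty$ everywhere except at the single point $(0,0)$. To apply Proposition \ref{prop:diff} I would verify its two hypotheses for $\mer=\mer_{FB}$. Since $\sqrt{a^2+b^2}$ is smooth away from the origin, $\mer_{FB}$ is infinitely differentiable on $\mathbb{R}^2\setminus\{(0,0)\}$; on the two positive coordinate half-axes, which form the zero set of $\mer_{FB}$ with the origin removed, a direct computation gives finite first and second partials (for instance $\partial^2\mer_{FB}/\partial a^2 = b^2/(a^2+b^2)^{3/2}$ vanishes on the positive $a$-axis), so hypothesis~1 holds. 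For hypothesis~2 I would compute the mixed partial
\begin{align}
\frac{\partial^2 \mer_{FB}}{\partial a\,\partial b} \quad &= \quad -\frac{ab}{(a^2+b^2)^{3/2}},
\end{align}
which is $O(1/r)$ with $r=\sqrt{a^2+b^2}$, while $\mer_{FB}^2=O(r^2)$ near the origin, so their product is $O(r)\to 0$. Proposition \ref{prop:diff} then gives that $\f$ is twice continuously differentiable wherever it vanishes.

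The genuinely stronger part of the statement is that the smoothness persists on a whole neighborhood of the solution rather than only at the zero. Here I would split the terms of $\f$ according to the index partition used in \eqref{eq:f}. For $i\notin\mathcal{I}$ the term is $\cFi_i^2$, which is $C^2$ by Assumption \ref{as:differen}. For $i\in\mathcal{I}\setminus\mathcal{Z}$ the definition of $\mathcal{Z}$ guarantees that $(\xstar_i,\cFi_i(\xstar))\neq(0,0)$ at the solution, so by continuity the argument $(\x_i,\cFi_i(\x))$ stays bounded away from the origin throughout a small enough neighborhood; since $\mer_{FB}$ is $C^\infty$ off the origin, the corresponding $\mer_{FB}^2$ term is $C^2$ there. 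The delicate case is $i\in\mathcal{Z}$, where the argument sits exactly at the origin at the solution and the term contributed to $\f$ is $\mer_{FB}^4$.

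For this case I would show directly that $\mer_{FB}^4$ is $C^2$ in a neighborhood of the origin. Away from the origin this is immediate; at the origin the second partials must be shown to extend continuously. Using $\mer_{FB}=O(r)$ together with the bounds $|\partial\mer_{FB}/\partial a|,|\partial\mer_{FB}/\partial b|\le 2$ and $|\partial^2\mer_{FB}/\partial a\partial b|,|\partial^2\mer_{FB}/\partial a^2|,|\partial^2\mer_{FB}/\partial b^2|=O(1/r)$, each second partial of $\mer_{FB}^4$ is, as in the proof of Proposition \ref{prop:diff}, a sum of a term $\mer_{FB}^2\cdot(\text{first partials})^2=O(r^2)$ and a term $\mer_{FB}^3\cdot(\text{second partials})=O(r^2)$; both tend to $0$ uniformly in the direction of approach, matching the value $0$ at the origin. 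Composition with the $C^2$ map $\cFi$ preserves this, so the $\mer_{FB}^4$ term is $C^2$ near the origin as well. Assembling the three cases, $\f$ is a finite sum of $C^2$ functions on a neighborhood of the solution, which is the desired conclusion.

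The main obstacle I anticipate is precisely the origin analysis for the degenerate indices. The raw squared Fischer--Burmeister function $\mer_{FB}^2$ fails to be $C^2$ at the origin (its pure second partials have direction-dependent limits there), so it is essential that the construction in \eqref{eq:f} raises the merit to $\mer_{FB}^4$ exactly on $\mathcal{Z}$. The crux is therefore the uniform $O(r^2)$ estimate on the second partials of $\mer_{FB}^4$, which is what buys continuity of the Hessian at the origin and thereby the full-neighborhood smoothness that distinguishes this corollary from the $\mer_{min}$ situation of Corollary \ref{corr:mindiff}.
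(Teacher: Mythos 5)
Your core argument coincides with the paper's: both proofs verify the two hypotheses of Proposition \ref{prop:diff} for $\mer_{FB}$, and the hypothesis-2 computation is literally the same limit, $\mer_{FB}^2\cdot\frac{\partial^2\mer_{FB}}{\partial a\partial b}=O(r^2)\cdot O(1/r)\to 0$ with $\frac{\partial^2\mer_{FB}}{\partial a\partial b}=-ab/(a^2+b^2)^{3/2}$. (The paper cites \cite{facchinei2007finite} for hypothesis 1 where you check it by hand; that is cosmetic.) Where you genuinely go further is the second half. The paper dismisses the points where $\f$ does not vanish with one sentence --- twice continuous differentiability ``elsewhere'' follows because $\mer_{FB}$ is smooth off the origin --- which silently skips the case in which the argument $(\x_i,\cFi_i(\x))$ of some term sits at or near $(0,0)$ at a point where $\f\neq 0$. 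You supply exactly the missing analysis: for $i\in\mathcal{I}\setminus\mathcal{Z}$ the argument stays bounded away from the origin on a neighborhood of $\xstar$ by continuity, and for $i\in\mathcal{Z}$ you prove the uniform $O(r^2)$ bound on the second partials of $\mer_{FB}^4$ (from $\mer_{FB}=O(r)$, bounded first partials, and $O(1/r)$ second partials), which is precisely what makes the Hessian of the degenerate terms continuous through the origin. This correctly isolates why the corollary for $\mer_{FB}$ yields smoothness on a whole neighborhood while Corollary \ref{corr:mindiff} only yields it at zeros, and it is content the paper's own proof does not actually provide; your observation that $\mer_{FB}^2$ itself fails to be $C^2$ at the origin is the right reason the fourth power on $\mathcal{Z}$ is essential. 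The one caveat is that your conclusion is (correctly) a neighborhood statement around $\xstar$, which is all the subsequent Taylor analysis requires; read as a global claim, the corollary would additionally need the squared (non-$\mathcal{Z}$) terms never to pass through the origin away from the solution, which neither your argument nor the paper's establishes.
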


We now define an isolated solution to a problem and assume that the problem of interest has this property. This is required to ensure that our approximation is well defined.
\begin{definition}\label{def:isol} \citep{nocedal2006numerical}
	A minimum $\xstar$ of a problem is said to be an \emph{isolated minimum}, if there is a neighborhood $\mathcal{B}(\xstar;\epsilon)$ of $\xstar$, where $\xstar$ is the only minimum of the problem.
\end{definition}

\begin{figure}[t]
    \centering
    \begin{subfigure}[b]{0.45\textwidth}
        \includegraphics[width=\textwidth]{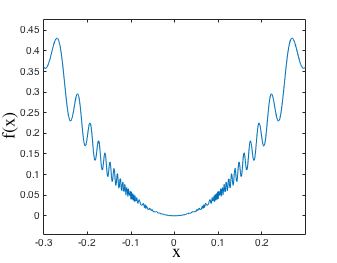}
        \caption{An example of a function where the global minimum $x=0$ is a non-isolated solution}\label{fig:intuitiona}
    \end{subfigure}
    \begin{subfigure}[b]{0.45\textwidth}
        \includegraphics[width=\textwidth]{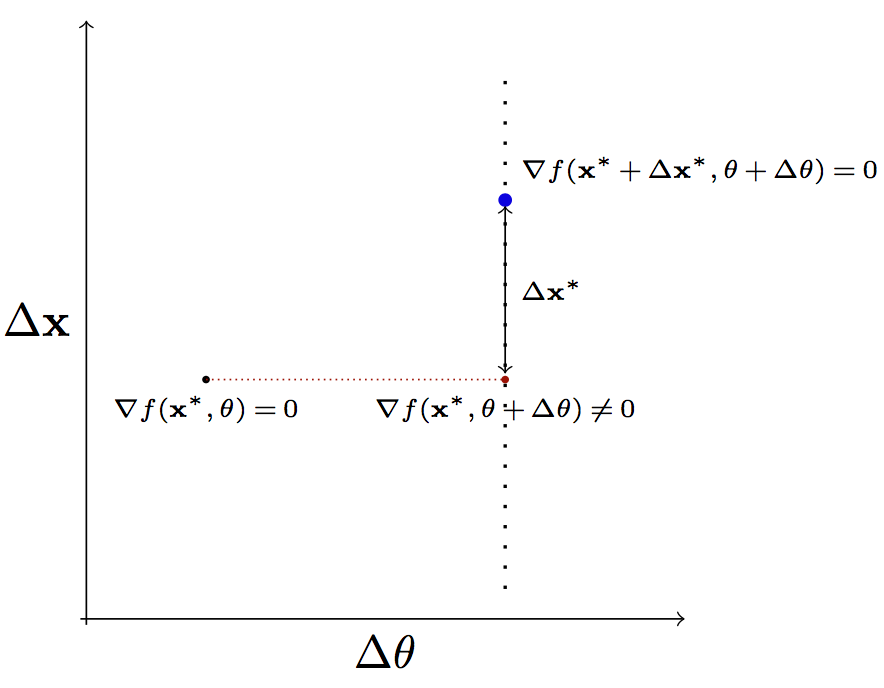}
        \caption{The intuition behind our approximation for finding where $\nabla \f(\x,\randa)=0$ under a small perturbation}\label{fig:intuitionb}
    \end{subfigure}
    \caption{Intuitions}
    \label{fig:intuition}
\end{figure}

A counter-example for an isolated minimum is shown on  {Fig. \ref{fig:intuitiona}}. It is a plot of the function
\begin{align}
f(x)\quad &= \quad 5x^2 + x^2\sin \left( \frac{1}{x^2}  \right)
\end{align}
and the global minimum at $x=0$ is not an isolated minimum as we can confirm that any open interval around $x=0$ has other minimum contained in it. Unlike this case, in this paper, we assume that if we obtain a global-minimum of $\f$, then it is an isolated minimum. The existence of a neighborhood $\mathcal{B}(\xstar;\epsilon)$ as required in definition \ref{def:isol} protects the approximation method from returning to a local minimum in the neighborhood for sufficiently small perturbations in problem parameters.
\begin{assumption}\label{as:isolated}
	For some fixed value of $\randa = \Expa$, there exists a known solution $\xstar(\Expa)$ such that it is an isolated global minimum of $\f(\x;\randa)$.
\end{assumption}

\subsection{Approximation Algorithm and error bounding} 

{This subsection achieves two primary results as follows. We now propose algorithm \ref{alg:Algoaprx} to approximate the covariance of the output given a covariance matrix for the input parameters. Following that Theorem \ref{thm:maintheorem} gives a mathematical proof that the algorithm indeed approximates the covariance matrix. The second key result is Theorem \ref{thm:bound} which bounds the error in the approximation. }

\SSDel{Given a deterministic shift in a parameter's value, the sensitivity of the solution has been studied in [20] for nonlinear optimization problems and in [7,8] in calculus of variations. This analysis is useful for analytic approaches, providing closed-form expressions for solution sensitivity. We aim to build on this research by proposing an approximation approach for large-scale stochastic complementarity problems, which do not necessarily overlap with the class of problems studied with calculus of variations. Our approach is amenable to extensions towards approximations for large-scale models, as we study the sensitivity of the variance of the solution, to a perturbation in the variance of input random parameters.} \sriAdd{Given a deterministic shift in a parameter’s value, the sensitivity of the solution has been studied in \cite{fiacco2009sensapprox,castillo2006} for mathematical programming problems (e.g., nonlinear or large-scale programs). Authors in \cite{castillo2008} used a perturbation technique to study the sensitivities in calculus of variations. We aim to build on the research in \cite{fiacco2009sensapprox,castillo2006} by proposing an approximation approach for large-scale stochastic complementarity problems. In particular, the sensitivity analysis in Chapter 3 in \cite{fiacco2009sensapprox} motivates the method developed in this paper, after converting the complementarity problem into an unconstrained optimization problem with sufficient smoothness properties. Chapters 4 and 5 in \cite{fiacco2009sensapprox} provide good context for computational approaches related to the ideas of Chapter 3. Our approach is amenable to extensions towards approximations for large-scale models, as we study the sensitivity of the variance of the solution, to a perturbation in the variance of input random parameters. We invite readers to refer to these works on further context in standard optimization problems. Chapter 2 of \cite{fiacco2009sensapprox} provides interesting examples on how small perturbations can lead to large changes in solutions. This is good context in situations where Assumptions 2 (and Corollaries 8 and 9) and 3 in our paper do not hold. The research in \cite{castillo2006} provides good context for standard optimization problems, including situations where we relax assumptions on smoothness and active constraints.}

{The intuition behind the approximation is shown on Fig. \ref{fig:intuitionb} and can be summarized as follows. Having posed the NCP as an unconstrained minimization of a function $\f$, we now approximate the change in the solution due to a perturbation of the parameters. Keeping the smoothness properties of $\f$ in mind, we say that the gradient of $\f$ vanishes at the solution before any perturbation. Following the random perturbation of parameters, we approximate the new value of the gradient of $\f$ at the old solution. Then we compute the step to be taken so that the gradient at the new point vanishes.  We formalize this idea in Theorem \ref{thm:maintheorem} and use that to build Algorithm \ref{alg:Algoaprx} with some features for increased efficiency.} The analysis builds on Dini's implicit function theorem \cite{krantz2012implicit} for deterministic perturbations and extends the results to predict covariance under random perturbations.

\begin{algorithm}[t]
	\caption{Approximating Covariance}
	  Solve the complementarity problem in \eqref{eq:NCP} for the mean value of $\randa= \Expa$, or solve the stochastic complementarity problem in $\eqref{eq:SCP}$ and calibrate the value of the parameters $\randa = \Expa$ for this solution. Call this solution as $\xstar$. Choose a tolerance level $\tau$. 
	\begin{algorithmic}[1]
	\State Evaluate ${\cF^*}\gets{\cF(\xstar;\Expa)}$, $G_{ij}\gets \frac{\partial \cFi_i(\xstar;\Expa)}{\partial \x_j} $, $L_{i,j}\gets \frac{\partial \cFi_i(\xstar;\Expa)}{\partial \randa_j} $.
	\State Choose a C-function $\mer$ such that the conditions in proposition \ref{prop:diff} are satisfied.
	\State Define the function $\mer^a(a,b) = \frac{\partial \mer(a,b)}{\partial a} $, $\mer^b(a,b) = \frac{\partial \mer(a,b)}{\partial b} $.
	\State Find the set of indices $\mathcal{Z} = \left\{ z\in \mathcal{I}: |\xstar_z| = |\cF^*_z| \leq \tau \right\}$.
	\State Define \begin{align}
	\nabPhi_{ij} \quad &\gets \quad  \left \{
	    \begin{array}{lr}
	        G_{ij}&\mbox{ if }i\not\in \mathcal{I}\\
	        0&\mbox{ if }i\in \mathcal{Z}\\
	        \mer^a(\xstar_i,\cFi^*_i)\delta_{ij} + \mer^b(\xstar_i,\cFi^*_i)G_{ij}
	        &\mbox{ otherwise}
	    \end{array}
	\right .
	\end{align}
	where $\delta_{ij} = 1$ if $i=j$ and 0 otherwise.
	\State Define \begin{align}
		\mathcal{N}_{ij}\quad &\gets \quad \left \{
		    \begin{array}{ll}
		        L_{ij}&\mbox{ if }i\not\in \mathcal{I}\\
		        0&\mbox{ if }i \in \mathcal{Z}\\
		        \mer^b(\xstar_i,\cFi_i^*)L_{ij}&\mbox{ otherwise}
		    \end{array}
		\right .
	\end{align}
	\State Solve the linear systems of equations for $\T$.
	\begin{align}
	\nabPhi\T \quad &= \quad \mathcal{N}
	\end{align}
	If $\nabPhi$ is non singular, we have a unique solution. If not, a least square solution or a solution obtained by calculating the Moore Penrose Pseudo inverse \citep{horn2012matrix} can be used.
	\State Given $\mathcal{C}$, a covariance matrix of the input random parameters, $\randa(\randomxi)$, \Return $\mathcal{C}^*  \gets \T \mathcal{C}\T^T$.
	\end{algorithmic}
	\label{alg:Algoaprx}
\end{algorithm}

\begin{theorem}\label{thm:maintheorem}
Algorithm \ref{alg:Algoaprx} generates Taylor's first-order approximation for the change in solution for a perturbation in parameters and computes the covariance of the solution for a complementarity problem with uncertain parameters with small variances.
\end{theorem}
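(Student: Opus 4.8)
The plan is to read off the approximation from Dini's implicit function theorem applied to the vector equation $\Mer(\x;\randa)=0$, viewing $\x$ as an implicit function of the parameters $\randa$ near the known solution. By Propositions \ref{prop:uncminNess} and \ref{prop:uncminSuff} the calibrated solution satisfies $\f(\xstar;\Expa)=0$, hence $\Mer(\xstar;\Expa)=0$, so $(\xstar,\Expa)$ is a root of $\Mer$. First I would verify by a case split over $i\in\mathcal{I}$, $i\in\mathcal{Z}$, and $i\notin\mathcal{I}$ that the matrices $\nabPhi$ and $\mathcal{N}$ assembled in Steps 5--6 of Algorithm \ref{alg:Algoaprx} are exactly the Jacobians $\nabla_{\x}\Mer$ and $\nabla_{\randa}\Mer$ evaluated at $(\xstar;\Expa)$. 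The only nontrivial rows are the biactive indices $i\in\mathcal{Z}$: there $\Mer_i=\mer^2$ vanishes together with its first derivatives at $(0,0)$ (which is exactly why $\mer$ is squared), so those rows are zero, consistent with the algorithm.

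Given this identification, the hypotheses of Proposition \ref{prop:diff} (and Corollaries \ref{corr:mindiff}, \ref{corr:FBdiff}) make $\Mer$ continuously differentiable at the root, so Dini's theorem yields a locally unique $C^1$ map $\x(\randa)$ with $\x(\Expa)=\xstar$ and derivative
\begin{align}
\frac{\partial \x}{\partial \randa}\Big|_{\Expa} \quad &= \quad -\,\nabPhi^{-1}\mathcal{N} \quad = \quad -\,\T,
\end{align}
whenever $\nabPhi$ is nonsingular. The first-order Taylor expansion $\x(\Expa+\perturb)\approx \xstar-\T\perturb$ is then precisely the claimed approximation for the change in solution, which establishes the first half of the statement. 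I would also record the equivalent route through the stationarity condition $\gradxf=0$: since $\hessxf=\nabPhi^{T}\nabPhi$ and $\nabla_{\randa}\gradxf=\nabPhi^{T}\mathcal{N}$ at the root, the implicit-function relation becomes the normal equations $\nabPhi^{T}\nabPhi\,\T=\nabPhi^{T}\mathcal{N}$, which collapse to $\nabPhi\T=\mathcal{N}$ when $\nabPhi$ is invertible and otherwise single out the least-squares / Moore--Penrose solution used in Step 7.

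For the covariance claim, model the parameters as $\randa=\randa(\randomxi)$ with mean $\Expa$ and covariance $\mathcal{C}$, so that $\perturb=\randa-\Expa$ has zero mean and covariance $\mathcal{C}$. Pushing the linearization through the covariance operator gives $\expec[\x(\randa)]\approx\xstar$ and
\begin{align}
\cov\big(\x(\randa)\big) \quad &\approx \quad \cov(-\T\perturb) \quad = \quad \T\,\mathcal{C}\,\T^{T} \quad = \quad \mathcal{C}^{*},
\end{align}
which is exactly the matrix returned in Step 8. The small-variance hypothesis is what legitimizes dropping the second- and higher-order Taylor terms: it keeps the realizations of $\randa$ inside the Dini neighborhood of $\Expa$ and makes the neglected remainder subdominant, with the quantitative control deferred to Theorem \ref{thm:bound}.

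The main obstacle is the biactive set $\mathcal{Z}$. The squaring device restores differentiability of $\Mer$ there, but at the cost of zero rows in $\nabPhi$, so $\nabPhi$ is singular exactly when $\mathcal{Z}\neq\emptyset$, and the true solution need not even be differentiable in $\randa$ at such degenerate roots. The delicate part of the argument is therefore to justify that the pseudoinverse solution of $\nabPhi\T=\mathcal{N}$ still furnishes a meaningful first-order sensitivity, and to confirm that the linear covariance pushforward remains the leading-order term under the small-variance assumption; making the size of the discarded remainder explicit is precisely the content I would hand off to Theorem \ref{thm:bound}.
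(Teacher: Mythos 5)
Your proposal is correct and arrives at exactly the linear system $\nabPhi\T=\mathcal{N}$ and the pushforward $\T\mathcal{C}\T^{T}$ that the paper derives, but your primary derivation enters through a different door. The paper works with the scalar merit function $\f=\frac{1}{2}\Vert\Mer\Vert_2^2$: it perturbs the stationarity condition $\gradxf(\xstar;\Expa)=0$ via the mean value theorem and then invokes the Gauss--Newton identities $\hessxf=\nabPhi^{T}\nabPhi$ and $\nabla_{\randa}\gradxf=\nabPhi^{T}\mathcal{N}$ (the extra terms vanish because every component of $\Mer$ is zero at the solution) to reduce the normal equations $\nabPhi^{T}\nabPhi\,\Delta\x=-\nabPhi^{T}\mathcal{N}\perturb$ to $\nabPhi\,\Delta\x=-\mathcal{N}\perturb$. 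You instead apply Dini's theorem directly to the vector equation $\Mer(\x;\randa)=0$ and read off $\T=\nabPhi^{-1}\mathcal{N}$ without ever forming the Hessian; your ``equivalent route'' aside is precisely the paper's argument. Your route is shorter and targets the perturbed NCP solution itself rather than a stationary point of $\f$; the paper's route is what makes the normal-equation/pseudoinverse fallback natural when $\mathcal{Z}\neq\emptyset$ and $\nabPhi$ has zero rows, a degeneracy you correctly flag and which the paper also handles only in the remarks following the theorem rather than inside the proof. Both derivations carry the same minor technical debt --- the mean value theorem, like Dini's theorem, needs differentiability of $\Mer$ on a neighborhood, whereas Proposition \ref{prop:diff} certifies it only at the zeros of $\f$ --- so your argument is at the paper's level of rigor, and the covariance step is identical.
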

\begin{proof}
	Consider the function $\f(\x;\randa)$. From Theorem \ref{prop:uncminNess}, $\xstar\equiv\xstar(\Expa)$ minimizes this function for $\randa = \Expa$. From proposition \ref{prop:diff}, we have $\f(\x;\randa)$ is twice continuously differentiable at all its zeros. Thus we have,
\begin{align}
\gradxf(\xstar;\Expa) \quad &= \quad 0
\end{align}
Now suppose the parameters $\Expa$ are perturbed by $\perturb$, then the above gradient can be written using the mean value theorem and then approximated up to the first order as follows.
\begin{align}
\gradxf(\xstar(\Expa),\Expa+\perturb)\quad &= \quad \gradxf(\xstar(\Expa),\Expa) \nonumber\\
\quad & \quad \qquad + \nabla_{\Expa}\gradxf(\xstar{(\Expa)},\widetilde{\randa})\perturb\\
\gradxf(\xstar(\Expa),\Expa+\perturb) - \gradxf(\xstar(\Expa),\Expa)\quad &\approx \quad \Jacob\perturb\label{eq:aprx1}\\
\mbox{where,}\quad& \nonumber\\
\widetilde{\randa} \quad &\in \quad [\randa,\randa+\perturb]\\
\Jacob_{ij}\quad &= \quad [\nabla_{\Expa}\gradxf(\xstar(\Expa),\Expa)]_{ij}\\
\quad &= \quad \frac{\partial [\gradxf(\xstar;\Expa)]_i}{\partial \Expa_j}\label{eq:Jacob}
\end{align}
Since $\Jacob\perturb$ is not guaranteed to be 0, we might have to alter $\x$ to bring the gradient back to zero. \ie we need $\Delta\x$ such that $\gradxf(\xstar(\Expa)+\Delta\x,\Expa+\perturb)= \mathbf{0}$. But by the mean value theorem,
\begin{align}
\gradxf(\xstar(\Expa)+\Delta\x,\Expa+\perturb)\quad &= \quad \gradxf(\xstar(\Expa),\Expa+\perturb) \nonumber\\
\quad & \quad \qquad + \hessxf(\widetilde{\x},\Expa+\perturb)\Delta\x\\
\mathbf{0}\quad &\approx \quad  \Jacob\perturb + \hessxf(\widetilde{\x},\Expa)\Delta\x\\
 \quad &\approx \quad \Jacob\perturb + \hessxf(\xstar(\Expa),\Expa)\Delta\x\label{eq:aprx2}\\
\Hstar\Delta\x \quad &\approx \quad -\Jacob\perturb\label{eq:linaprx}\\
\mbox{where,}\quad& \nonumber\\
\widetilde{\x}\quad &\in \quad [\xstar(\Expa),\xstar(\Expa)+\Delta\x]\\
[\Hstar]_{ij} \quad &= \quad [\hessxf(\xstar(\Expa),\Expa)]_{ij}\\
\quad &= \quad \frac{\partial [\gradxf(\xstar;\Expa)]_i}{\partial \x_j}\label{eq:Hstar}
\end{align}
Now from \cite{nocedal2006numerical}, the gradient of the least squares function $\f$ can be written as
\begin{align}
\gradxf(\xstar,\Expa)\quad &= \quad \nabPhi^T\Mer(\xstar,\Expa)\label{eq:Jdef}\\
[\nabPhi]_{ij}\quad &= \quad \frac{\partial \Mer_i(\xstar,\Expa)}{\partial \x_j}\\
\quad &= \quad \left\{ \begin{array}{ll}
	\frac{\partial \cF_i(\xstar,\Expa)}{\partial \x_j}\quad&\quad\mbox{if}\quad i\not\in \mathcal{I}\\
	\frac{\partial \mer^2(\x_i,\cF_i(\xstar,\Expa))}{\partial \x_j}\quad&\quad\mbox{if}\quad i\in \mathcal{Z}\\
	\frac{\partial \mer(\x_i,\cF_i(\xstar,\Expa))}{\partial \x_j}\quad&\quad\mbox{otherwise}\\
\end{array} \right .\\
\quad &= \quad \left\{ \begin{array}{ll}
	\frac{\partial \cF_i(\xstar,\Expa)}{\partial \x_j}\quad&\quad\mbox{if}\quad i\not\in \mathcal{I}\\
	0\quad&\quad\mbox{if}\quad i\in \mathcal{Z} \\
	\frac{\partial \mer_i}{\partial \x_j}\quad&\quad\mbox{otherwise}\\
\end{array} \right .
\end{align}
which is the form of $\nabPhi$ defined in algorithm \ref{alg:Algoaprx}. Also
\begin{align}
\Hstar \quad = \quad \hessxf(\xstar;\Expa) \quad &= \quad \nabPhi^T\nabPhi + \sum_{i=1}^n \Mer_i(\xstar;\Expa)\nabla_{\x}^2\Mer_i(\xstar;\Expa)\\
\quad &= \quad \nabPhi^T\nabPhi \label{eq:Htemp1}
\end{align}
where the second term vanishes since we have from Theorem \ref{prop:uncminNess} that each term of $\Mer$ individually vanishes at the solution. Now
\begin{align}
\Jacob \quad &= \quad \nabla_{\x\randa}\f(\xstar;\Expa)\\
\Jacob_{ij}\quad &= \quad \frac{\partial [\gradxf(\xstar;\Expa)]_i}{\partial \randa_j} \\
\quad &= \quad \frac{\partial }{\partial \randa_j}\left( \sum_{k=1}^n [\nabla_{\x}\Mer(\xstar;\Expa)]_{ki}\Mer_k(\xstar;\Expa)  \right) \\
\quad &= \quad \sum_{k=1}^n \left( \frac{\partial [\nabla_{\x}\Mer(\xstar;\Expa)]_{ki}}{\partial \randa_j} \Mer_k(\xstar;\Expa) + [\nabla_{\x}\Mer(\xstar;\Expa)]_{ki} \frac{\partial \Mer_k(\xstar;\Expa)}{\partial \randa_j}  \right) \\
\quad &= \quad \sum_{k=1}^n \nabPhi_{ki}\mathcal{N}_{kj} \quad = \quad \nabPhi^T \mathcal{N}\label{eq:Jtemp1}
\end{align}
where the first term vanished because $\Mer_i$ are individually zeros, and we define
\begin{align}
\mathcal{N}_{ij} \quad &= \quad \frac{\partial \Mer_i(\xstar;\Expa)}{\partial \randa_j}\\
\quad &= \quad \left \{
    \begin{array}{lr}
        \frac{\partial \cFi_i}{\partial \x_j} &\mbox{ if }i \not\in \mathcal{I}\\
        2\mer(\xstar_i;\cFi_i^*)\mer^b(\xstar_i;\cFi_i^*)\frac{\partial \cFi_i}{\partial \randa_j}&\mbox{ if }i\in \mathcal{Z} \\
        \mer^b(\xstar_i;\cFi_i^*)\frac{\partial \cFi_i(\xstar;\Expa)}{\partial \randa_j}&\mbox{ otherwise }\\
    \end{array}
\right .\\
\quad &= \quad \left \{
    \begin{array}{ll}
        \frac{\partial \cFi_i}{\partial \randa_j} &\mbox{ if }i \not\in \mathcal{I}\\
        0&\mbox{ if }i\in \mathcal{Z} \\
        \mer^b(\xstar_i;\cFi_i^*)\frac{\partial \cFi_i(\xstar;\Expa)}{\partial \randa_j}&\mbox{ otherwise }\\
    \end{array}
\right .
\end{align}
which is the form of $\mathcal{N}$ defined in algorithm \ref{alg:Algoaprx}. By assumption \ref{as:isolated}, we have a unique minimum in the neighborhood of $\xstar$ where the gradient vanishes. So we have from \eqref{eq:linaprx}, \eqref{eq:Htemp1} and \eqref{eq:Jtemp1}
\begin{align}
\Hstar\Delta\x \quad &= \quad -\Jacob\perturb \\
\nabPhi^T\nabPhi\Delta\x \quad &= \quad -\nabPhi^T\mathcal{N}\perturb
\end{align}
$\Delta \x$ solves the above equation, if it solves
\begin{align}
\nabPhi \Delta\x \quad &= \quad -\mathcal{N}\perturb
\end{align}
By defining $\T$ as the solution to the linear system of equations
\begin{align}
\nabPhi\T \quad &= \quad \mathcal{N}\label{eq:TheResult}\\
\Delta\x \quad &= \quad -\T\perturb
\end{align}
and we have the above first-order approximation. From \cite{seber1989}, we know that if some vector $\x$ has covariance $\C$, then for a matrix $A$, the vector $A\x$ will have covariance $A\C A^T$. So we have.
\begin{align}
\cov(\Delta\x) \quad &\approx \quad \T\cov(\perturb)\T^T
\end{align}
Thus we approximate the covariance of $\Delta\x$ in algorithm \ref{alg:Algoaprx}.
\end{proof}

{The matrix $\nabPhi$ could potentially not have full rank, for example, when $\mathcal{Z}$ is non-empty, i.e., when we have weak complementarity terms. But we note that $\nabPhi^T\nabPhi$ is the Hessian of $\f$ and the null-space of the Hessian corresponds to the directions where the gradient doesn't change for small perturbations. So in a first-order sence, small perturbations in those directions do not move the solution, keeping the method robust even under weak complementarity.} {This is further confirmed by computational experiments detailed in Section 5.2 and Appendix D where we have cases with weak complementarity terms but no significant error.}

{Further, when $\nabPhi$ is singular, we use Moore-Penrose pseudoinverse, $\nabPhi^{\dagger}$ to solve the system of equations \eqref{eq:TheResult}. Among possibly infinite solutions that minimize the error $\left \Vert \nabPhi\T-\mathcal{N}\right \Vert_2$, $\nabPhi^{\dagger}\mathcal{N}$ gives the solution that minimizes $\left \Vert \T\right \Vert_2$ \citep{ben2003generalized}. This would lead us to identifying the smallest step $\Delta\x$ that could be taken to reach the perturbed solution, up to first-order approximation and hence give the most conservative estimate of the uncertainty.  Uniqueness and existence of $\nabPhi^{\dagger}$ is guaranteed and it can be computed efficiently.  }For computational purposes the matrix $\T$ in the above equation has to be calculated only once, irrespective of the number of scenarios for which we would like to run for the covariance of $\randa$. Thus if $\x \in \mathbb{R}^n$, $\randa \in \mathbb{R}^m$ and we want to test the output covariance for $k$ different input covariance cases, the complexity is equal to that of solving a system of $n$ linear equations $m$ times as in \eqref{eq:TheResult}, and hence is $O(mn^2)$. \ie the complexity is quadratic in the number of output variables, linear in the number of input parameters and constant in the number of covariance scenarios we would like to run.

In Theorem \ref{thm:bound} below, we prove that the error in the approximation of theorem \ref{thm:maintheorem} can be bounded using the condition number of the Hessian. We need the following assumption that the condition number of the Hessian of $\f$ is bounded and the Hessian is Lipschitz continuous.

\begin{assumption}\label{as:condnNos}
	At the known solution of the complementarity problem of interest ($\randa = \Expa$),
	\begin{enumerate}
		\item The condition number of the Hessian of $\f$ defined is finite and equals to $\kappa_H$
		\item The Hessian of $\f$ is Lipschitz continuous with a Lipschitz constant $\Lipsh$.
	\end{enumerate}
\end{assumption}

\begin{theorem}\label{thm:bound}
	With assumption \ref{as:condnNos} holding, the error in the linear approximation \ref{eq:linaprx} for a perturbation of $\epsilon$ is $o(\epsilon)$.
\end{theorem}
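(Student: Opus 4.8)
The plan is to read the claim as a bound on the gap between the \emph{exact} perturbed stationary point and the point predicted by the linear system \eqref{eq:linaprx}, and to show this gap is controlled quadratically in $\epsilon = \|\perturb\|$ via Taylor's theorem with an explicit remainder. First I would record that $\gradxf(\xstar;\Expa)=0$ (Proposition \ref{prop:uncminNess} and Proposition \ref{prop:diff}), and observe that the finiteness of $\kappa_H$ in Assumption \ref{as:condnNos} forces $\Hstar=\hessxf(\xstar;\Expa)$ to be nonsingular, with $\|\Hstar^{-1}\|=\kappa_H/\|\Hstar\|$ finite. Dini's implicit function theorem \cite{krantz2012implicit} then supplies a $C^1$ branch $\randa\mapsto \x(\randa)$ with $\x(\Expa)=\xstar$ and $\gradxf(\x(\randa);\randa)=0$ near $\Expa$. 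Writing $\Delta\x^{\mathrm{ex}}=\x(\Expa+\perturb)-\xstar$ for the exact solution change and $\Delta\x$ for the solution of \eqref{eq:linaprx}, the quantity to bound is $\|\Delta\x^{\mathrm{ex}}-\Delta\x\|$.

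Next I would expand $\gradxf$ to first order about $(\xstar,\Expa)$ in the joint variable $(\x,\randa)$. Taylor's theorem yields
\begin{align}
0 = \gradxf(\xstar+\Delta\x^{\mathrm{ex}};\Expa+\perturb) = \gradxf(\xstar;\Expa) + \Hstar\Delta\x^{\mathrm{ex}} + \Jacob\perturb + \mathbf{R},
\end{align}
where $\mathbf{R}$ is the second-order remainder. The Lipschitz continuity of the Hessian in Assumption \ref{as:condnNos} controls exactly the variation of the first derivative of $\gradxf$, that is, both the $\hessxf$ block used in \eqref{eq:aprx2} and the mixed block $\Jacob$ used in \eqref{eq:aprx1}, and so delivers the quadratic estimate $\|\mathbf{R}\|\le \tfrac{1}{2}\Lipsh\,\|(\Delta\x^{\mathrm{ex}},\perturb)\|^2$. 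Subtracting the defining relation $\Hstar\Delta\x=-\Jacob\perturb$ of \eqref{eq:linaprx} gives $\Hstar(\Delta\x^{\mathrm{ex}}-\Delta\x)=-\mathbf{R}$, whence $\|\Delta\x^{\mathrm{ex}}-\Delta\x\|\le \|\Hstar^{-1}\|\,\|\mathbf{R}\|$.

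The remaining ingredient is the a priori bound $\|\Delta\x^{\mathrm{ex}}\|=O(\epsilon)$, which I would obtain either directly from the $C^1$ branch (so that $\Delta\x^{\mathrm{ex}}=-\Hstar^{-1}\Jacob\perturb+o(\epsilon)$ and $\|\Delta\x^{\mathrm{ex}}\|\le(\|\Hstar^{-1}\|\,\|\Jacob\|+1)\epsilon$ for small $\epsilon$), or from a short contraction argument on the map $\mathbf{v}\mapsto -\Hstar^{-1}(\Jacob\perturb+\mathbf{R}(\mathbf{v}))$, whose fixed point is $\Delta\x^{\mathrm{ex}}$. Substituting this into the remainder estimate makes $\|(\Delta\x^{\mathrm{ex}},\perturb)\|^2=O(\epsilon^2)$, so
\begin{align}
\|\Delta\x^{\mathrm{ex}}-\Delta\x\| \le \|\Hstar^{-1}\|\,\frac{\Lipsh}{2}\,\|(\Delta\x^{\mathrm{ex}},\perturb)\|^2 = O(\epsilon^2),
\end{align}
and $O(\epsilon^2)=o(\epsilon)$ as claimed, with the constant explicit in $\kappa_H$ (through $\|\Hstar^{-1}\|$) and $\Lipsh$.

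The main obstacle I anticipate is the a priori estimate $\|\Delta\x^{\mathrm{ex}}\|=O(\epsilon)$: it is precisely what upgrades the remainder from $o(1)$ to genuinely second order, and it is where the nonsingularity forced by the finite condition number is indispensable. A second, more bookkeeping, difficulty is justifying that the single constant $\Lipsh$ bounds both remainder contributions, namely that ``the Hessian of $\f$'' in Assumption \ref{as:condnNos} is understood as the full second derivative of $\f$ in $(\x,\randa)$, covering the mixed partials constituting $\Jacob$; otherwise one must append a separate Lipschitz hypothesis on $\Jacob$ to close the argument.
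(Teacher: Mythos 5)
Your proof is correct, but it is structured differently from the paper's. The paper keeps the mean-value-theorem representation exact by introducing perturbed matrices $\widetilde{\Hstar}=\hessxf(\widetilde{\x},\Expa)$ and $\widetilde{\Jacob}=\nabla_{\randa}\gradxf(\xstar,\widetilde{\randa})$, bounds $\Vert\varepsilon_H\Vert\le\Lipsh\Vert\Delta\x\Vert$ and $\Vert\varepsilon_J\Vert\le\Lipsh\Vert\perturb\Vert$ via the Lipschitz hypothesis, and then invokes the classical perturbation bound for matrix inversion — this is where $\kappa_H$ enters explicitly, together with an auxiliary smallness hypothesis $\kappa_H\Vert\varepsilon_H\Vert\ll\Vert\Hstar\Vert$ — to conclude $\Vert\widetilde{\Hstar}^{-1}\widetilde{\Jacob}-\Hstar^{-1}\Jacob\Vert\le k_1\Vert\Delta\x\Vert+k_2\Vert\perturb\Vert$, i.e.\ an $O(\epsilon)$ bound on the error in the linear \emph{map}, which yields $o(\epsilon)$ for the solution error after multiplying by $\perturb$. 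You instead write the exact perturbed stationarity condition with a single Taylor remainder $\mathbf{R}$, subtract the defining relation $\Hstar\Delta\x=-\Jacob\perturb$, and invert $\Hstar$ once; the condition number appears only implicitly through the nonsingularity of $\Hstar$ and the size of $\Vert\Hstar^{-1}\Vert$. Your route buys a cleaner statement of what the ``error'' is (the gap between the exact perturbed solution and the linear prediction), an explicit $O(\epsilon^2)$ constant, and it dispenses with the paper's unstated smallness hypothesis; you also supply the a priori estimate $\Vert\Delta\x^{\mathrm{ex}}\Vert=O(\epsilon)$ via the implicit function theorem, a step the paper glosses over (it bounds everything in terms of $\Vert\Delta\x\Vert$ and $\Vert\perturb\Vert$ without relating them). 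The paper's route, in exchange, makes the dependence on $\kappa_H$ explicit in the constants, which is evidently the motivation for Assumption \ref{as:condnNos}. Your closing caveat — that a single Lipschitz constant $\Lipsh$ must be understood as covering the full second derivative in $(\x,\randa)$, including the mixed block $\Jacob$ — is well taken and is in fact how the paper uses the assumption, applying $\Lipsh$ to both the $\x$- and $\randa$-variations of $\gradxf$.
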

\begin{proof}
	Since $\nabla^2f$ is Lipschitz continuous on both $\x$ and $\randa$, we can write for $\widetilde{\x}$ near $\xstar$,
	\begin{align}
	\left\Vert \hessxf(\xstar,\Expa) - \hessxf(\widetilde{\x},\Expa) \right\Vert \quad &\leq \quad \Lipsh \left\Vert \xstar - \widetilde{\x} \right\Vert \\
	\quad &\leq \quad \Lipsh\left\Vert \Delta\x \right\Vert\\
	\widetilde{\Hstar} \quad &= \quad \hessxf(\widetilde{\x},\Expa) \\
	\quad &= \quad \Hstar + \varepsilon_H
	\end{align}
	where $\left\Vert \varepsilon_H \right\Vert \leq \Lipsh \left\Vert \Delta\x \right\Vert$. Applying the Lipschitz continuity on $\randa$,
	\begin{align}
		\left\Vert \nabla_{\randa}\gradxf(\xstar,\widetilde{\randa}) - \nabla_{\randa}\gradxf(\xstar,\Expa) \right\Vert \quad &\leq \quad \Lipsh\left\Vert \widetilde{\randa} - \Expa\right\Vert\\
		\quad &\leq \quad  \Lipsh\left\Vert \perturb\right\Vert\\		
		\widetilde{\Jacob} \quad &= \quad \nabla_{\randa}\gradxf(\xstar,\widetilde{\randa})\\
		\quad &= \quad \Jacob + \varepsilon_J
	\end{align}
	where $\left\Vert \varepsilon_J \right\Vert \leq \Lipsh \left\Vert \Delta\theta \right\Vert$. Thus the equation
	\begin{align}
	\widetilde{\Hstar}\Delta\x \quad &= \quad \widetilde{\Jacob}\perturb
	\end{align}
	is exact, even if we cannot compute $\widetilde{\Hstar}$ and $\widetilde{\Jacob}$ exactly. Now the error in inverting $\widetilde{\Hstar}$ is bounded by the condition number \citep[Ch. ~5]{horn2012matrix}.
	\begin{align}
	\frac{\left\Vert \Hstar^{-1} - \widetilde{\Hstar}^{-1} \right\Vert}{\left\Vert \widetilde{\Hstar}^{-1} \right\Vert} \quad &\leq \quad \frac{\kappa_H\frac{\left\Vert \varepsilon_H \right\Vert}{\left\Vert \widetilde{\Hstar} \right\Vert}}{1-\kappa_H\frac{\left\Vert \varepsilon_H \right\Vert}{\left\Vert \widetilde{\Hstar} \right\Vert}}
	\end{align}
	Assuming $\kappa_H\left\Vert \varepsilon_H \right\Vert \ll \left\Vert \Hstar \right\Vert$, the above equation becomes
	\begin{align}
	\frac{\left\Vert \Hstar^{-1} - \widetilde{\Hstar}^{-1} \right\Vert}{\left\Vert \widetilde{\Hstar}^{-1} \right\Vert} \quad &\leq \quad \kappa_H\frac{\left\Vert \varepsilon_H \right\Vert}{\left\Vert \widetilde{\Hstar} \right\Vert}\\
	\Rightarrow \left\Vert \widetilde{\Hstar}^{-1}-{\Hstar}^{-1} \right\Vert \quad &\leq \quad   \kappa_H  \frac{\left\Vert \widetilde{\Hstar}^{-1} \right\Vert}{\left\Vert \widetilde{\Hstar} \right\Vert}\left\Vert \varepsilon_H \right\Vert \label{eq:condTemp1}\\	
	\Rightarrow  \left\Vert \widetilde{\Hstar}^{-1}\widetilde{\Jacob} - \Hstar^{-1} {\Jacob} - \Hstar^{-1}\varepsilon_J  \right\Vert  \quad &\leq \quad    	\kappa_H  \frac{\left\Vert \widetilde{\Hstar}^{-1} \right\Vert}{\left\Vert \widetilde{\Hstar} \right\Vert}\varepsilon_H \left\Vert \Jacob \right\Vert + \kappa_H  \frac{\left\Vert \widetilde{\Hstar}^{-1} \right\Vert}{\left\Vert \widetilde{\Hstar} \right\Vert}\left\Vert \varepsilon_H \right\Vert \left\Vert \varepsilon_J \right\Vert\\
	\Rightarrow \left\Vert \widetilde{\Hstar}^{-1}\widetilde{\Jacob} - \Hstar^{-1}\Jacob \right\Vert \quad &\leq \quad k_1 \left\Vert \Delta\x\right\Vert + k_2 \left\Vert \perturb \right\Vert\label{eq:bound}\\
	\text{with}\qquad & \nonumber\\
	k_1 \quad &= \quad \kappa_H \Lipsh \frac{\left\Vert \widetilde{\Hstar}^{-1} \right\Vert}{\left\Vert \widetilde{\Hstar} \right\Vert}\left\Vert \Jacob \right\Vert \\
	k_2 \quad &= \quad \Lipsh\left\Vert \Hstar^{-1} \right\Vert
	\end{align}
Thus we have from \eqref{eq:bound}, that the error in the approximation done in algorithm \ref{alg:Algoaprx} is bounded.
\end{proof}

\section{Stochastic Sensitivity Analyses} 
\label{sec:total_sensitivity}
In this section, we quantify the sensitivity of the variance of the solution to variance in each of the input parameters.  To achieve this, we define total linear sensitivity and how it can be approximated using the matrix $\T$ derived {in \eqref{eq:TheResult} }. We then proceed to prove that these quantities also bound the maximum increase in uncertainties of the output.
\begin{definition}\label{def:LinearSensitivity}
	Given a function $\f: \mathbb{R}^m\mapsto \mathbb{R}^n$, the \emph{total linear sensitivity}, $\sens_d \in \mathbb{R}_{+}$ of a dimension $d\leq m; \, d\in \mathbb{N}$ at a point $\x \in \mathbb{R}^m$ is defined for $\delta>0$, sufficiently small,
	\begin{align}
	\sens_d \quad &= \quad \inf \left\{ \alpha: \left\vert\vphantom{\int}\left\Vert \f(\x+\delta e_d) \right\Vert_2 - \left\Vert \f(\x) \right\Vert_2\right\vert\leq \delta\alpha + o \left( \delta^2 \right)  \right\}
	\end{align}
	where $e_d$ is the $d$-th standard basis vector.
\end{definition}
This is a bound on the distance by which the function value can move for a small perturbation in the input. Now we look at the solution to the parametrized complementarity problem in \eqref{eq:NCP} as a function from the space of parameter tuples to the space of solution tuples and bound the change in solution for a small perturbation. The next proposition shows how the total linear sensitivity can be calculated from the linear approximation matrix $\T$ derived earlier.
\begin{proposition}\label{prop:sens1}
Suppose we know, $G \in \mathbb{R}^{n\times m}$ such that $G_{ij} = \frac{\partial \f_i(\x)}{\partial \x_j} $, then $\sens_d = \sqrt{\left( \sum_{i=1}^n G_{id}^2 \right) }$
\end{proposition}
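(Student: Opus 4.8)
The plan is to collapse the infimum in Definition \ref{def:LinearSensitivity} onto a first-order Taylor expansion of $\f$ and then read off the coefficient of the linear term. First I would use the hypothesis that the Jacobian $G$ of $\f$ exists at $\x$, so that $\f$ is differentiable there, and write for the perturbation along the $d$-th coordinate
\begin{align}
\f(\x + \delta e_d) \quad &= \quad \f(\x) + \delta\, G e_d + o(\delta),
\end{align}
where $G e_d$ is the $d$-th column of $G$, i.e.\ the vector with components $G_{id}$, $i=1,\ldots,n$. Factoring the positive scalar $\delta$ out of the leading term and using the reverse-triangle bound $\big|\,\|\delta\,G e_d\|_2-\|o(\delta)\|_2\,\big|\le \|\delta\,G e_d + o(\delta)\|_2 \le \|\delta\,G e_d\|_2+\|o(\delta)\|_2$ gives $\|\delta\,G e_d + o(\delta)\|_2 = \delta\sqrt{\sum_{i=1}^n G_{id}^2} + o(\delta)$.

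Next I would connect this to the difference of norms appearing in the definition. The sensitivity is evaluated at a base point that is a zero of $\f$ — the situation of interest, since here $\f$ plays the role of the residual/change map whose value vanishes at the unperturbed solution (cf.\ Proposition \ref{prop:uncminNess}, where each component of the merit vector vanishes at the solution). Hence $\|\f(\x)\|_2 = 0$ and the difference of norms collapses to $\big|\,\|\f(\x+\delta e_d)\|_2 - \|\f(\x)\|_2\,\big| = \|\f(\x+\delta e_d)\|_2 = \delta\sqrt{\sum_{i=1}^n G_{id}^2} + o(\delta)$.

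Finally I would evaluate the infimum. Writing $c_d := \sqrt{\sum_{i=1}^n G_{id}^2}$, an admissible $\alpha$ must satisfy $\delta c_d + o(\delta) \le \delta\alpha + o(\delta^2)$ for all sufficiently small $\delta>0$. Dividing by $\delta$ and letting $\delta\to 0^+$ forces $\alpha \ge c_d$, so no $\alpha < c_d$ is admissible; conversely every $\alpha > c_d$ is admissible because the strictly positive linear term $\delta(\alpha - c_d)$ eventually dominates the remainder. Therefore the infimum of the admissible set is exactly $c_d = \sqrt{\sum_{i=1}^n G_{id}^2}$, proving the claim.

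I expect the main obstacle to be bookkeeping in the infimum step rather than any deep idea: because the definition tolerates a remainder of order $o(\delta^2)$ (not merely $o(\delta)$), the boundary value $\alpha = c_d$ may or may not itself belong to the admissible set depending on the sign of the suppressed second-order term, but this does not change the infimum. The one genuine assumption I must flag is that the base point is a zero of $\f$; without it the difference of norms would instead single out the directional derivative of $\|\f\|_2$, namely $|\,\f(\x)^{\!\top} G e_d\,|/\|\f(\x)\|_2$, and the clean column-norm formula would fail.
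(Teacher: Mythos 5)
Your proposal is correct in substance and follows the same basic route as the paper's proof --- a first-order Taylor expansion of $\f$ along $e_d$ followed by reading off the Euclidean norm of the $d$-th column of $G$ --- but the two arguments diverge exactly at the delicate points, and the comparison is instructive. The paper sandwiches $\|\f(\x+\delta e_d)\|_2$ between $\|\f(\x)\|_2 \pm \delta\sqrt{\sum_{i=1}^n G_{id}^2} + o(\delta^2)$ using the triangle and reverse-triangle inequalities; this shows that $\alpha = \sqrt{\sum_{i=1}^n G_{id}^2}$ is \emph{admissible} in Definition \ref{def:LinearSensitivity}, i.e.\ it proves $\sens_d \le \sqrt{\sum_{i=1}^n G_{id}^2}$, but it never argues that no smaller $\alpha$ is admissible, so the claimed equality (tightness of the infimum) is left unaddressed. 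You do prove tightness, and your limiting argument (every admissible $\alpha$ satisfies $\alpha \ge c_d$, and every $\alpha > c_d$ is admissible, so the infimum is $c_d$ whether or not $c_d$ itself is admissible) is precisely the piece the paper omits; your remark that the $o(\delta^2)$ tolerance does not move the infimum is also correct. The price you pay is the extra hypothesis $\f(\x)=0$, which is not in the statement and does not hold in the paper's intended application: in Section \ref{sec:total_sensitivity} the proposition is invoked with $\f$ playing the role of the solution map $\randa \mapsto \xstar(\randa)$ and $G = \T$, whose value at the base point is the equilibrium itself, generally nonzero. As you correctly flag, without $\f(\x)=0$ the difference of norms in Definition \ref{def:LinearSensitivity} has first-order coefficient $\vert\f(\x)^{T} G e_d\vert/\|\f(\x)\|_2$, which by Cauchy--Schwarz can be strictly smaller than the column norm, so only the inequality $\sens_d \le \sqrt{\sum_{i=1}^n G_{id}^2}$ survives in general. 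The cleanest reconciliation --- consistent with the paper's gloss that $\sens_d$ is ``a bound on the distance by which the function value can move'' --- is to read the definition as constraining $\|\f(\x+\delta e_d)-\f(\x)\|_2$ rather than the difference of the two norms; under that reading your infimum argument goes through verbatim without the $\f(\x)=0$ assumption and delivers the exact equality, whereas the paper's two-sided sandwich still only delivers admissibility.
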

\begin{proof}
	See Appendix A
\end{proof}

The above proposition proves that the $\T$ matrix obtained in \eqref{eq:TheResult} is sufficient to approximate the total linear sensitivity. The following result suggests how the total linear sensitivity can approximate the total variance in the output variables.
\begin{theorem}
	Given a function $\f:\mathbb{R}^m \mapsto \mathbb{R}^n$ and $\sens_d$, the increase in the total uncertainty in the output, \ie the sum of variances of the output variables, for a small increase of the variance of an input parameter, $\sigma_d^2$ of $\x_d$ is approximated by $\sens_d^2\sigma_d^2$.
\end{theorem}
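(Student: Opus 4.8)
The plan is to propagate the first-order (linear) model of $\f$ through the covariance of the inputs and then read off the coefficient of $\sigma_d^2$ in the resulting total output variance. First I would linearize $\f$ about the mean of the input: writing $G_{ij} = \partial \f_i(\x)/\partial \x_j$ for the Jacobian evaluated at $\x$, a small random perturbation $\Delta\x$ of the inputs produces $\Delta\f \approx G\,\Delta\x$, which is exactly the first-order approximation whose error is controlled by Theorem \ref{thm:bound}. Applying the covariance-propagation identity already used in the proof of Theorem \ref{thm:maintheorem} (from \cite{seber1989}), if $\Delta\x$ has covariance $\mathcal{C}$ then $\Delta\f$ has covariance $G\mathcal{C}G^T$, so that the total uncertainty in the output---the sum of the output variances---equals $\operatorname{tr}(G\mathcal{C}G^T)$.

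Next I would expand this trace in terms of the entries of $\mathcal{C}$,
\begin{align}
\operatorname{tr}(G\mathcal{C}G^T) \quad &= \quad \sum_{i=1}^n\sum_{j=1}^m\sum_{k=1}^m G_{ij}\mathcal{C}_{jk}G_{ik},
\end{align}
and isolate the dependence on the variance $\mathcal{C}_{dd} = \sigma_d^2$ of the $d$-th input. Since the trace is linear in $\mathcal{C}$, the coefficient multiplying $\mathcal{C}_{dd}$ is precisely $\sum_{i=1}^n G_{id}^2$; equivalently, increasing the variance of $\x_d$ by $\sigma_d^2$ while holding the remaining entries of $\mathcal{C}$ fixed increases the total output uncertainty by exactly $\left(\sum_{i=1}^n G_{id}^2\right)\sigma_d^2$ in this linear model. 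The same conclusion can be reached component-wise: the induced variance in output $\f_i$ from the variance $\sigma_d^2$ of $\x_d$ alone is $G_{id}^2\sigma_d^2$, and summing over $i$ recovers the same factor.

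Finally I would invoke Proposition \ref{prop:sens1}, which gives $\sens_d = \sqrt{\sum_{i=1}^n G_{id}^2}$ and hence $\sens_d^2 = \sum_{i=1}^n G_{id}^2$. Substituting this into the coefficient above yields the claimed increase $\sens_d^2\sigma_d^2$. The point requiring care---rather than a genuine obstacle---is the interpretation of ``increase of variance'': the statement is a marginal one, asserting that $\sens_d^2$ is the sensitivity of $\operatorname{tr}(\cov(\f))$ to the single diagonal entry $\sigma_d^2$, with all cross-covariances held fixed, and it is the linearity of the trace in $\mathcal{C}$ that makes this reading exact. The word ``approximated'' absorbs the second-order Taylor error from the linearization step, whose bound is supplied by Theorem \ref{thm:bound}.
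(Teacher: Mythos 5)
Your proposal is correct and follows essentially the same route as the paper: both propagate the input covariance through the Jacobian via $G\mathcal{C}G^T$, perturb the $d$-th diagonal entry by $\sigma_d^2$, and read off the trace increase $\sigma_d^2\sum_{i=1}^n G_{id}^2 = \sens_d^2\sigma_d^2$ using Proposition \ref{prop:sens1}. The paper phrases the perturbation with an elementary matrix $E_d$ and you phrase it as linearity of the trace in $\mathcal{C}_{dd}$, but these are the same computation.
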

\begin{proof}
	Let $E_d$ be the matrix of size $m\times m$ with zeros everywhere except the $d$-th diagonal element, where it is 1. Given $C = \cov(\x(\randomxi))$, for a small perturbation $\sigma^2$ in the variance of $\x_d$, the covariance of $\f(\x)$ changes as follows.
\begin{align}
C^* \quad &\approx \quad \gradxf C\gradxf^T\\
C^* + \Delta C^* \quad &\approx \quad \gradxf (C+\sigma^2 E_d)\gradxf^T\\
\quad &= \quad C^* + \sigma^2\gradxf E_d\gradxf^T\\
[\Delta C^*]_{ij} \quad &\approx \quad \sigma^2 [\gradxf]_{id}[\gradxf]_{jd}\\
\sum_{i=1}^n [\Delta C^*]_{ii} \quad &\approx \quad \sigma^2\sens_d^2
\end{align}
which is the total increase in variance. The off-diagonal terms do not affect the total uncertainty in the system because, the symmetric matrix $C$ can be diagonalized as $QDQ^T$, where $Q$ is a rotation matrix, and the trace is invariant under orthogonal transformations.
\end{proof}
With the above result, we can determine the contribution of each input parameter to the total uncertainty in the output. 

\section{Application to optimization} 
\label{sec:application_to_unconstrained_minimization}
To illustrate the application of this method explained in algorithm \ref{alg:Algoaprx}, we use it to derive an approximation for the covariance of the solution of certain canonical optimization problems. The goal of this section is to walk the reader through a simple application of the method to develop intuition of the analysis and results. 

To start with, we assume conditions on the differentiability and convexity of the objective function.
\begin{assumption}\label{as:objfun}
The objective function $\f(\x;\randa)$ is strictly convex in $\x$ and is twice continuously differentiable in $\x$ and $\randa$.
\end{assumption}

In the theorem below, we approximate the covariance of the decision variables of a convex optimization with uncertainties in the linear term and with only linear equality constraints.
\begin{theorem}
	With assumption \ref{as:objfun} holding, the covariance of the primal and dual variables at the optimum of the problem,
	\begin{align}
	\Min_{\x} \quad &\f(\x;\randa) = g(\x)+c(\randa)^T\x\\
	\text{subject to }&Ax = b(\randa)&(\y)
	\end{align}
	where $\randa = \randa(\randomxi)$ are random parameters with covariance $C$, is first-order approximated by $\T C\T^T$ where
	\begin{align}
	\T \quad &= \quad \left( \begin{array}{cc}
	\nabla_{\x}^2g(\xstar)&A^T\\
	A&0
	\end{array}	 \right) ^{-1} \left( \begin{array}{c}
	-\nabla_{\randa}c(\randa)\\
	\nabla_{\randa}b(\randa)
	\end{array} \right)
	\end{align}
\end{theorem}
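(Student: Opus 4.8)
The plan is to recognize this equality-constrained program as a complementarity problem in the \emph{combined} primal-dual variable and then apply Algorithm \ref{alg:Algoaprx} essentially verbatim. First I would form the Lagrangian $\mathcal{L}(\x,\y;\randa) = g(\x) + c(\randa)^T\x + \y^T(A\x - b(\randa))$ and collect its stationarity and primal-feasibility conditions into the single vector equation
\begin{align}
\cF(\x,\y;\randa) \quad &= \quad \begin{pmatrix} \nabla_{\x}g(\x) + c(\randa) + A^T\y \\ A\x - b(\randa) \end{pmatrix} \quad = \quad \mathbf{0}.
\end{align}
Because the problem carries only equality constraints, there are no nonnegativity restrictions, so this is exactly the NCP in \eqref{eq:NCP} with the full space as the cone, \ie $\mathcal{I}=\emptyset$ in assumption \ref{as:cone}, for which $\mathbb{K}^*=\{\mathbf{0}\}$ and the complementarity relation collapses to $\cF=\mathbf{0}$.

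Next I would verify the hypotheses needed to invoke Theorem \ref{thm:maintheorem}. Since $\mathcal{I}=\emptyset$, the set $\mathcal{Z}$ is empty and no C-function is ever invoked: every component of $\Mer$ falls into the $i\not\in\mathcal{I}$ branch, so $\Mer=\cF$ and $\f=\tfrac12\|\cF\|_2^2$. Twice continuous differentiability of $\f$ then follows from the $i\not\in\mathcal{I}$ branch of proposition \ref{prop:diff} together with assumption \ref{as:objfun}, and strict convexity of $g$ (with $A$ of full row rank) guarantees that the optimum is an isolated solution, so that assumption \ref{as:isolated} holds for the pair $(\xstar,\ystar)$.

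I would then compute the two matrices Algorithm \ref{alg:Algoaprx} requires by direct differentiation of $\cF$. Differentiating with respect to $(\x,\y)$ gives the KKT matrix
\begin{align}
\nabPhi \quad &= \quad \begin{pmatrix} \nabla_{\x}^2 g(\xstar) & A^T \\ A & 0 \end{pmatrix},
\end{align}
and differentiating with respect to $\randa$ gives
\begin{align}
\mathcal{N} \quad &= \quad \begin{pmatrix} \nabla_{\randa}c(\randa) \\ -\nabla_{\randa}b(\randa) \end{pmatrix}.
\end{align}
Solving $\nabPhi\T=\mathcal{N}$ as in \eqref{eq:TheResult} yields $\T=\nabPhi^{-1}\mathcal{N}$. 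Since the covariance delivered by Theorem \ref{thm:maintheorem} is $\T C\T^T$, which is invariant under $\T\mapsto-\T$, this agrees with the stated $\T$ up to an immaterial overall sign, and the approximation $\T C\T^T$ follows directly from Theorem \ref{thm:maintheorem}.

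The main obstacle I expect is the nonsingularity of the KKT matrix $\nabPhi$. Its invertibility requires both that $\nabla_{\x}^2 g(\xstar)$ be positive definite on the null space of $A$ (supplied by strict convexity) and that $A$ have full row rank, a constraint-qualification hypothesis that must be assumed or separately checked rather than read off directly from assumption \ref{as:objfun}. Where $A$ is rank-deficient or $g$ is merely convex, I would fall back on the Moore-Penrose pseudoinverse exactly as Algorithm \ref{alg:Algoaprx} prescribes, which still produces a well-defined $\T$ and hence a well-defined covariance approximation $\T C\T^T$.
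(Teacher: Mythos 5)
Your proposal is correct in substance and lands on exactly the linear system the paper uses, but it gets there by a different route. The paper does \emph{not} pass through the NCP/merit-function machinery at all: it writes down the KKT conditions directly, perturbs them to first order in $\perturb$, and solves the resulting saddle-point system for $(\Delta\x,\Delta\y)$ --- essentially re-proving the special case from scratch. You instead view the KKT system as the degenerate NCP with $\mathcal{I}=\emptyset$ (so $\mathbb{K}=\mathbb{R}^n$, $\mathbb{K}^*=\{\mathbf{0}\}$, and complementarity collapses to $\cF=\mathbf{0}$) and invoke Theorem \ref{thm:maintheorem} wholesale. Your reduction is arguably cleaner in that it reuses the general result and makes the sign ambiguity in the paper's own statement (the displayed system has $+\nabla_{\randa}c$, $-\nabla_{\randa}b$ while the stated $\T$ has the opposite signs) explicitly harmless. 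What the paper's direct derivation buys is a weaker smoothness requirement: routing through proposition \ref{prop:diff} and Theorem \ref{thm:maintheorem} formally requires assumption \ref{as:differen}, i.e.\ $\cF$ twice continuously differentiable, and since here $\cF$ contains $\nabla_{\x}g$ this would demand three continuous derivatives of $g$, whereas assumption \ref{as:objfun} supplies only two --- which is all the direct perturbation of the KKT residual needs, since only $\nabla_{\x}^2 g$ appears in the Jacobian. This is a minor mismatch rather than a fatal gap (the extra derivatives would in any case be multiplied by residuals that vanish at the solution), but if you keep your route you should either strengthen the smoothness hypothesis or note that the $i\not\in\mathcal{I}$ branch needs only one derivative of $\cF$ to form the linear approximation. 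Your closing remarks on full row rank of $A$ and the pseudoinverse fallback match the paper's own caveat that nonsingularity holds ``if $A$ has full rank.''
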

\begin{proof}
	For the given optimization problem, because of assumption \ref{as:objfun} and linear independence constraint qualification (LICQ), the KKT conditions are necessary and sufficient for optimality. The KKT condition satisfied at a solution $(\xstar,\ystar)$ for the problem are given by
\begin{align}
\nabla_{\x}g(\xstar) + c(\randa) + A^T\ystar \quad &= \quad  0\\
A\xstar \quad &= \quad b(\randa)
\end{align}
for some vector $\y$ so that the equation is well defined. Suppose from there, $\randa$ is perturbed by $\perturb$, we have
\begin{align}
\nabla_{\x}g(\xstar) + c(\randa+\perturb) + A^T\ystar \quad &\approx \quad  \nabla_{\randa}c(\randa)\perturb \\
A\xstar -b(\randa+\perturb)	 \quad &\approx \quad -\nabla_{\randa}b(\randa) \perturb
\end{align}
Now we need to find $\Delta\x$ and $\Delta\y$ such that
\begin{align}
\nabla_{\x}g(\xstar+\Delta\x) + c(\randa+\perturb) + A^T(\ystar+\Delta\y) \quad &\approx \quad  0\\
A(\xstar+\Delta\x) -b(\randa+\perturb)	 \quad &\approx \quad 0\\
\nabla_{\x}^2g(\xstar)\Delta\x + A^T\Delta\y \quad &\approx \quad \nabla_{\randa}c(\randa)\perturb\\
A\Delta\x \quad &\approx \quad -\nabla_{\randa}b(\randa)\perturb
\end{align}
The above conditions can be compactly represented as
\begin{align}
\left( \begin{array}{cc}
	\nabla_{\x}^2g(\xstar)&A^T\\
	A&0
	\end{array}	 \right) \left( \begin{array}{l}
	\Delta\x\\ \Delta\y
	\end{array} \right) \quad &= \quad \left( \begin{array}{l}
	\nabla_{\randa}c(\randa)\\ -\nabla_{\randa}b(\randa)
	\end{array} \right) \perturb
\end{align}
If $A$ has full rank, then the above matrix is non-singular. So the change in the decision variables $\x$ and the duals $\y$ can be written as a linear transformation of the perturbation in the random parameters. And we now have
\begin{align}
\cov \left( \begin{array}{l}
	\Delta\x\\ \Delta\y
	\end{array}  \right) \quad &= \quad \T \cov(\randa)\T^T\\
	\T \quad &= \quad \left( \begin{array}{cc}
	\nabla_{\x}^2g(\xstar)&A^T\\
	A&0
	\end{array}	 \right) ^{-1} \left( \begin{array}{c}
	-\nabla_{\randa}c(\randa)\\
	\nabla_{\randa}b(\randa)
	\end{array} \right)
\end{align}
\end{proof}
In the corollary below, we show that the method suggested is accurate (\ie has zero error) for an unconstrained quadratic optimization problem with uncertainty in the linear term.
\begin{corollary}\label{thm:QP}
	For an optimization problem with uncertainty of objectives of the form,
	\begin{align}
		f(\x;\randa) \quad &= \quad \frac{1}{2}\x^TG\x + \randa(\randomxi)^T\x
	\end{align}
	where $G$ is positive definite, the approximation method has zero error. In other words, the obtained covariance matrix is exact.
\end{corollary}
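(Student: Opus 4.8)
The plan is to exhibit the exact solution map $\randa \mapsto \xstar(\randa)$ and observe that it is affine, so that the first-order approximation underlying Algorithm \ref{alg:Algoaprx} carries no remainder. This is the unconstrained specialization of the preceding theorem, obtained by dropping the constraint matrix $A$ and taking $g(\x) = \frac{1}{2}\x^TG\x$, $c(\randa) = \randa$; equivalently, it is a complementarity problem over $\mathbb{K} = \mathbb{R}^n$ (so $\mathcal{I} = \emptyset$) with $\cF(\x;\randa) = \gradxf = G\x + \randa$.

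First I would solve the stationarity condition. Since $G$ is positive definite, $\f$ is strictly convex with a unique global minimizer characterized by $\gradxf = G\x + \randa = 0$, giving $\xstar(\randa) = -G^{-1}\randa$. The central observation is that this map is \emph{linear} in $\randa$; hence for any base point $\Expa$ and perturbation $\perturb$,
\begin{align}
\Delta\x \quad = \quad \xstar(\Expa + \perturb) - \xstar(\Expa) \quad = \quad -G^{-1}\perturb,
\end{align}
with exact equality and no higher-order term.

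Next I would verify that every approximation step in the proof of Theorem \ref{thm:maintheorem} becomes an equality here. Because $\hessxf \equiv G$ and $\nabla_{\x\randa}\f \equiv I$ are \emph{constant}, the two mean-value-theorem expansions \eqref{eq:aprx1} and \eqref{eq:aprx2} have vanishing remainders, so $\Hstar = G$ and $\Jacob = I$ hold exactly. The algorithm then produces $\nabPhi = G$ and $\mathcal{N} = I$, whence $\T = G^{-1}$ is the exact solution of $\nabPhi\T = \mathcal{N}$. An equivalent one-line argument runs through Theorem \ref{thm:bound}: the Hessian of a quadratic is constant, so its Lipschitz constant $\Lipsh = 0$ and the error bound \eqref{eq:bound} is identically zero.

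Finally I would assemble the covariance. Using $\cov(A\x) = A\cov(\x)A^T$ together with $G = G^T$,
\begin{align}
\cov(\xstar) \quad = \quad \cov(-G^{-1}\randa) \quad = \quad G^{-1}CG^{-1} \quad = \quad \T C\T^T,
\end{align}
which is exactly the quantity returned by Algorithm \ref{alg:Algoaprx}. There is no genuine obstacle: the only care required is confirming that the linearity of $\xstar$ in $\randa$ (equivalently, the constancy of the Hessian and cross-derivative) collapses each ``$\approx$'' in Theorem \ref{thm:maintheorem} into ``$=$''.
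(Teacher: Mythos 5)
Your proposal is correct and follows essentially the same route as the paper: both compute $\gradxf = G\x + \randa$, observe that $\hessxf \equiv G$ and $\nabla^2_{\x\randa}\f \equiv I$ are constant in $\x$, conclude the linear system for $\Delta\x$ is exact rather than approximate, and read off $\T = G^{-1}$ and $\cov(\Delta\x)=\T C \T^T$ with zero error. Your added remarks (the explicit affine solution map $\xstar(\randa)=-G^{-1}\randa$ and the vanishing Lipschitz constant in Theorem \ref{thm:bound}) are consistent embellishments of the same argument, not a different proof.
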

\begin{proof}
	See Appendix A
\end{proof}

\section{Application to a general oligopoly market}
\label{Sec:Duopoly}
We now present an example of a complementarity problem in a natural gas oligopoly and show how the methods developed in this paper can be applied.

\subsection{Problem Formulation and results} 
\label{sub:problem_formulation_and_results}
Consider $k$ producers competitively producing natural gas in a Nash-Cournot game. Let the random unit costs of production be $\gamma_i(\randomxi), \quad i\in \left\{ 1,\ldots,k \right\}$. Also, let us assume that the consumer behavior is modeled by a linear demand curve $P(\tilde{Q})$ as follows.
\begin{align}
P \quad &= \quad a(\randomxi) + b(\randomxi)\tilde{Q}
\end{align}
where $P$ is the price the consumer is willing to pay, $\tilde{Q}$ is the total quantity of the natural gas produced and random variables $a(\randomxi) > 0,\, b(\randomxi)<0\, \forall \randomxi\in\Omega $. Suppose the producers are maximizing their profits, then the 
Nash equilibrium can be obtained by solving the following complementarity problem \citep{cottle2009linear,gabriel2012complementarity}.
\begin{align}
0\leq Q_i\perp\cFi_i \left(  \mathbf{Q}  \right) =  \gamma_i-a -b \left( \sum_{j=1}^kQ_k \right) -bQ_i\geq 0 \label{eq:Oligopoly}
\end{align}
In this formulation, $a,b,\gamma_i$ correspond to $\randa$ and $Q_i$ correspond to $\x$ in \eqref{eq:NCP} with $\mathcal{I}=\left\{ 1,2,\ldots,k \right\}$. In the current numerical example, let us consider a duopoly where $k=2$. Let
\begin{align}
\expec \left( \begin{array}{cccc}
\gamma_1&
\gamma_2&
a&
b
\end{array} \right)^T \quad &= \quad  \begin{pmatrix}
2&1&15&-1
\end{pmatrix} ^T
\end{align}
Solving the complementarity problem deterministically with the above parameter values, we get $Q_1$ and $Q_2$ to be 4 and 5 respectively. We use the C-function $\mer_{min}(x,y) = \min(x,y)$ for this example to get
\begin{alignat}{3}
\nabPhi \quad &= \quad \left( \begin{array}{cc}
2&1\\
1&2
\end{array} \right) &\qquad& \mathcal{N}\quad &= \quad \left(
\begin{array}{cccc}
1&0&-1&-13\\
0&1&-1&-14
\end{array}
 \right)
\end{alignat}
Now we have from \eqref{eq:TheResult}
\begin{alignat}{3}
\T \quad &= \quad \nabPhi^{-1}\mathcal{N}& \qquad &
\quad &= \quad \frac{1}{3}\left( \begin{array}{cccc}
2&-1&-1&-12\\
-1&2&-1&-15
\end{array} \right)
\end{alignat}
Having obtained $\T$, we attempt to get insight on how uncertainties in various input parameters propagate through the model causing uncertainty in the equilibrium quantities. If we assume that all these parameters, viz. $\gamma_1,\gamma_2,a,b$ have a 10\% coefficient of variation and are all uncorrelated, then the covariance matrix of the input is
\begin{align}
C_1 \quad &= \quad \left( \begin{array}{cccc}
0.04&0&0&0\\
0&0.01&0&0\\
0&0&2.25&0\\
0&0&0&0.01
\end{array} \right)
\end{align}
Then the covariance matrix of the solution would be
\begin{alignat}{3}
C_1^* \quad &= \quad \T C_1\T^T& \qquad&
\quad &= \quad \left( \begin{array}{cc}
0.4289&0.4389\\
0.4389&0.5089
\end{array} \right)
\end{alignat}
The standard deviation of the produced quantities are 0.65($=\sqrt{0.4289}$) and 0.71($=\sqrt{0.5089}$) respectively. The produced quantities also have about 95\% positive correlation as an increase in demand will cause both producers to produce more and a decrease in demand will cause both producers to produce less.

If we assume that we have perfect knowledge about the demand curve, and if the uncertainty is only in the production costs, then the new parameter covariance $C_2$ has the third and fourth diagonal term of $C_1$ as zero. In such a scenario, we would expect the decrease in the quantity of production of one player to cause an increase in the quantity of production of the other and vice versa, caused by re-adjustment of market share. We can see this effect by computing the covariance of the solution as $\T C_2\T$. The solution thus obtained shows that the produced quantities are negatively correlated with a correlation of $-85\%$. The uncertainties in the produced quantities are 3\% and 2\% respectively of the quantity produced by each producer. We also note that the variances are smaller now, as we no longer have uncertainties stemming from the demand side of the problem.

Now if we assume a more realistic scenario of the production costs being correlated (60\% correlation), then we note that the produced quantity are negatively correlated with  $-62\%$ correlation. The standard deviations in the produced quantities have also dropped to about 2.9\% and 1.2\%. Thus we not only obtain insight about the uncertainties in the output, but also the correlation between the output parameters. From an energy market policy maker's perspective this is crucial information as it helps identifying the regions where increase or decrease in production, consumption, price, pipeline flows and infrastructural expansions occur simultaneously and where they change asynchronously. 
Now we calculate the sensitivity of each of the input parameters to identify the parameter that causes maximum uncertainty in the output. The values for $\sens$ for each of the four parameters $\gamma_1,\,\gamma_2,\,a,\,b$ are calculated below.
\begin{align}
\sens \quad &= \quad \frac{1}{3}\begin{pmatrix}
	\sqrt{5} && \sqrt{5} && \sqrt{2} &&\sqrt{369}
\end{pmatrix}^T \nonumber\\
 \quad &= \quad \begin{pmatrix}
	0.745&&0.745&&0.471&&6.40
\end{pmatrix}^T
\end{align}
Thus we see that the solution is more sensitive to the slope of the demand curve than to say production cost. Strictly speaking, if we define the variance in equilibrium as the sum of the variance of all output variables, this says, a unit increase in variance of the slope of the demand curve will be magnified about 41 times $(6.4^2)$ variance in the equilibrium. {However, a unit increase in the variance of the production cost only increases the variance in equilibrium by 0.556 ($0.745^2$) units.} 
\subsection{Computational Complexity} 
\label{sub:computational_complexity}
We used a Monte-Carlo based method as a comparison against our approximation method to compute covariance in the decision variables. {To achieve this, we modeled the oligopoly complementarity problem mentioned in \eqref{eq:Oligopoly} varying the number of players, and hence the number of random parameters and the decision variables.} For the Monte-Carlo simulation based approach, a symmetrically balanced stratified design \citep{shields2015refined} is used with each dimension divided into two strata. With increasing number of random parameters and equilibrium variables, Monte-Carlo methods become increasingly inefficient as the number of simulations required grows exponentially. A comparison of the time taken in an \emph{8GB RAM 1600 MHz DDR3 2.5GHz Intel Core i5} processor to solve the above oligopoly problem with varying number of players is shown in Fig. \ref{fig:complexity}. Despite developments in algorithms to solve complementarity problems, the said exponential growth in the number of sample points required in a Monte-Carlo based approach deters the computational speed. A problem with as few as 25 uncertain variables takes about 2 hours to solve and one with 30 uncertain variables takes about seven days to solve using Monte-Carlo based approaches {while it takes few seconds to minutes in the first-order approximation method. Fig \ref{fig:MCSrace} compares the error between 5 rounds of Monte-Carlo simulation and the first-order approximation method. More details on these computational experiments are provided in Appendix D}.

\begin{figure}[t]
	\centering
    \begin{subfigure}[b]{0.45\textwidth}
 	\includegraphics[width=\textwidth]{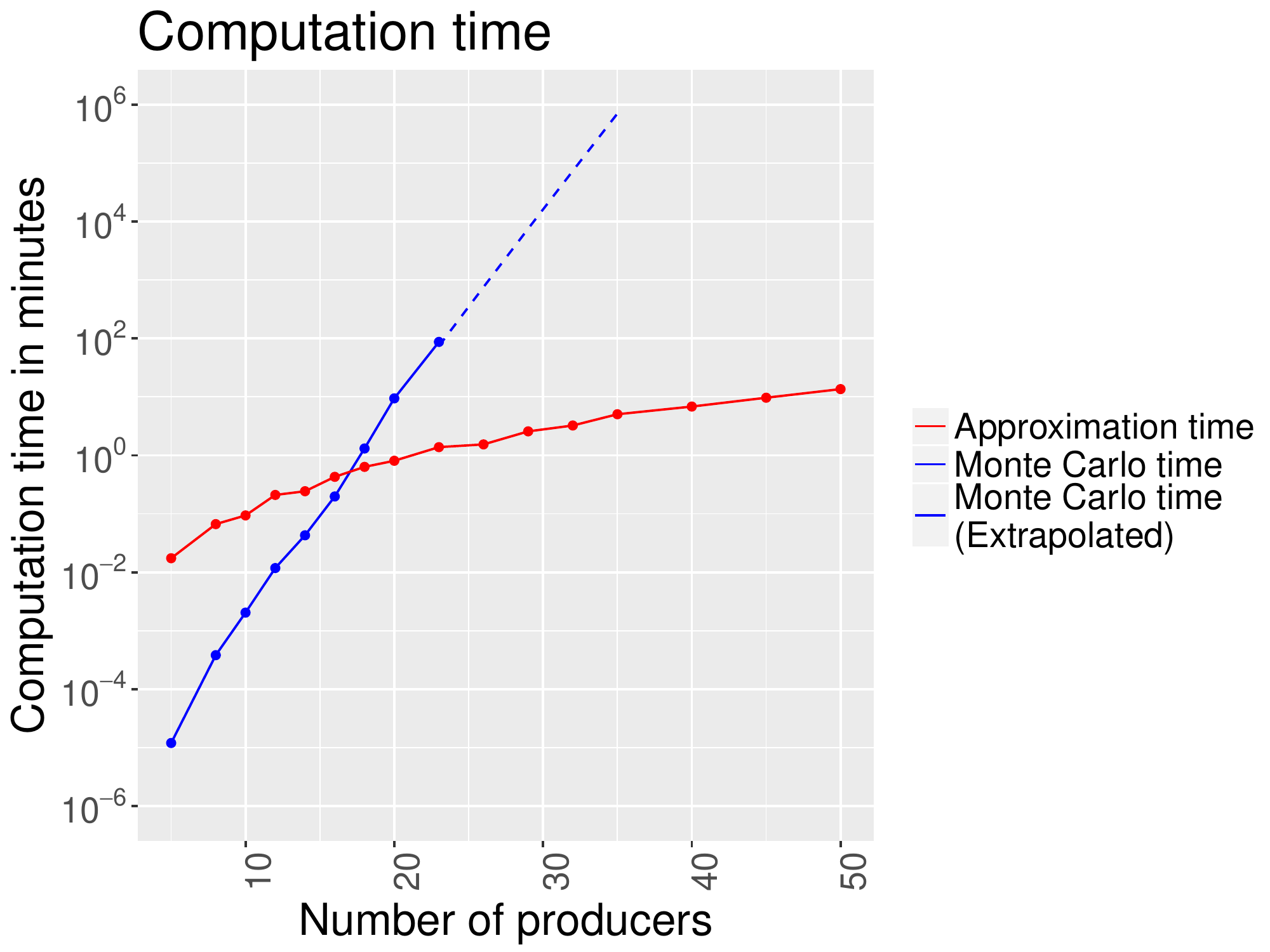}\caption{Run time comparison}\label{fig:complexity}
    \end{subfigure}
    \begin{subfigure}[b]{0.45\textwidth}
        \includegraphics[width=\textwidth]{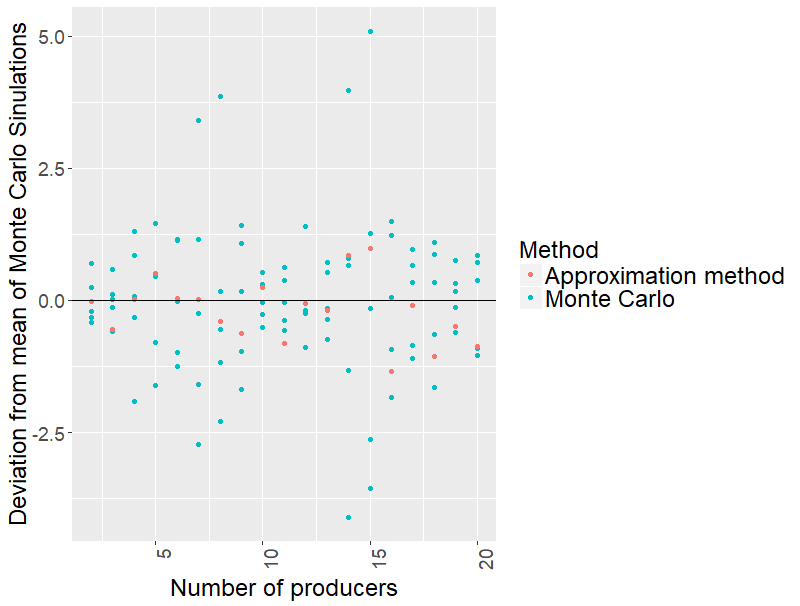}\caption{Error comparison}\label{fig:MCSrace}
    \end{subfigure}
    \caption{Computational experiments comparing Monte-Carlo methods and First-order approximation method}
\end{figure}

\section{Application to North American Natural Gas Market} 
\label{sec:NANGAM}
\begin{figure}
	\includegraphics[width=0.57\textwidth]{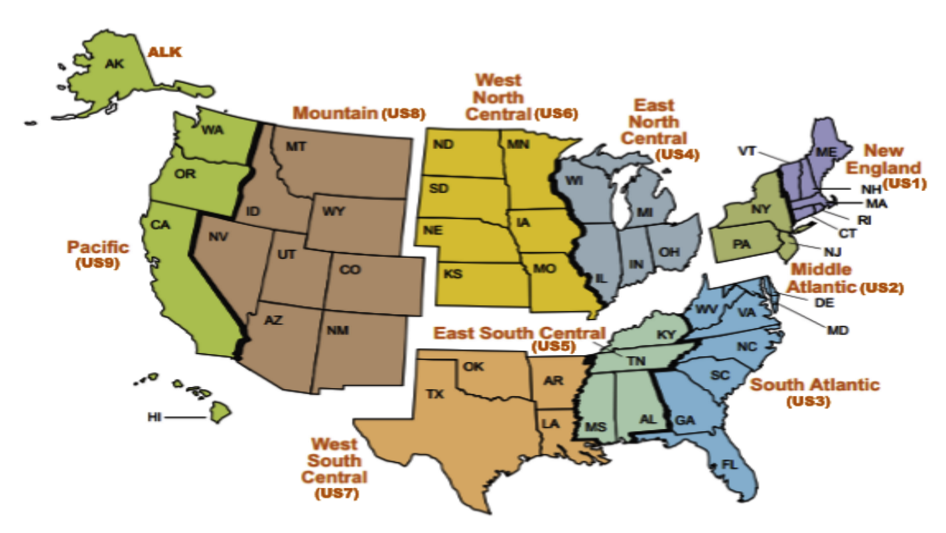}
	\includegraphics[width=0.4\textwidth]{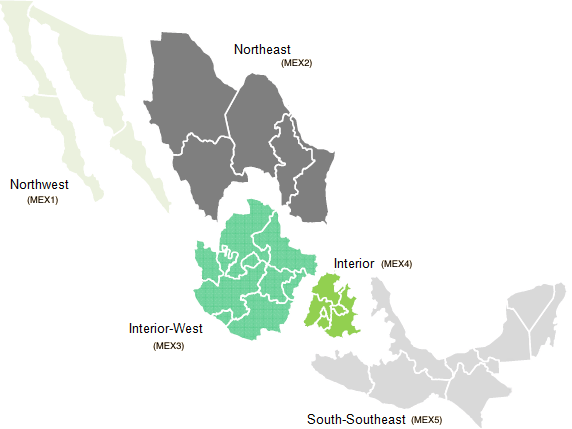}
	\caption{Regional disaggregation of United States and Mexico. Source: \cite{outlook2015us} and U.S. Energy Information Administration \url{http://www.eia.gov/todayinenergy/detail.php?id=16471}}
	\label{fig:Division}
\end{figure}
In Mexico, motivation to move from coal to cleaner energy sources creates an increasing trend in natural gas consumption, {particularly in the power sector}.{ Technology change, including fracking, has made natural gas available at a low cost. This resulted in increased production and higher proven reserves in the U.S. Therefore, the US is expected to become a net exporter of Natural Gas (increasing pipelines exports to Mexico and LNG) during the next years \citep{MexIncAEO,Feijoo}.}  The North American Natural Gas Model (NANGAM) developed in \cite{Feijoo} analyzes the impacts of cross border trade with Mexico. NANGAM models the equilibrium under various scenarios by competitively maximizing the profits of suppliers and pipeline operators, and the utility of consumers, resulting in a complementarity problem. The model also uses the Golombek function \citep{golombek1995effects,huppmann2013endogenous} to model the increase in marginal cost of production when producing close to capacity. The formal description of the model is provided in Appendix B. 

{For the model in this paper,} which is motivated by NANGAM, we have disaggregated the United States into 9 census regions (US1-9) and Alaska \citep{outlook2015us}. Mexico is divided into 5 regions (MEX1-5). A map showing this regional disaggregation is shown in Fig. \ref{fig:Division}. Further Canada is divided into two zones, Canada East {(CAE)} and Canada West {(CAW)}. The model has 13 suppliers, 17 consumers, 17 nodes, and 7 time-steps. This amounts to 12,047 variables (primal and dual) and 2023 parameters. The gradient matrix of the complementarity function would contain $12,047^2$ elements and a Hessian matrix will have $12047^3$ elements which is more than 1700 trillion floating point variables. We need efficient methods to handle these large objects. We observe, however, that the dependence of each component of the complementarity function is limited to few variables, thus making the gradient matrix sparse. Efficient sparse matrix tools in scipy \citep{jones2015scipy} are used along with a python class we specially built to handle a sparse multi-dimensional array. The details of this class are given in Appendix C.

This model is calibrated to match the region-wise production and consumption data by adjusting the parameters of the demand curve, supply curve and the transportation cost. The source for the projected numbers are the same as the ones in Table 2 of \cite{Feijoo}. The parameters of the demand curve were chosen in such a way that an elasticity of 0.29 is maintained at the solution to be consistent with \cite{elasticity}. 
\subsection{Covariance Matrix Calibration} 
\label{sub:covariance_matrix_calibration}
We used the method developed in algorithm \ref{alg:Algoaprx} to understand the propagation of uncertainty in the model. The covariance for each parameter across years is obtained by fitting a Wiener process to the parameter value. This is chosen to mimic the Markovian and independent increment properties of market parameters. Thus we have for any parameter
\begin{align}
d\randa(t) \quad &= \quad d\mu_{\randa}(t) + \sigma_{\randa}dB(t)
\end{align}
where $\mu_{\randa}$ is calibrated, $\sigma_{\randa}$ is chosen to be $1\%$ of the average value of $\mu_{\randa}$ in the analyzed period and $B(t)$ is the standard Brownian motion. The diffusion parameter $\sigma_{\randa}$ is assumed to be independent of time. Additionally to understand the effect of greater uncertainty in US7, that accounts for about 40\% of the total production in the continent, the parameters of production cost are assumed to have 5 times the variance than in any other region.
\subsection{Results}
\label{sub:res}
The deterministic version of the problem is solved using the PATH algorithm \citep{Dirkse1995} by assuming a mean value for all random parameters. Following this, algorithm \ref{alg:Algoaprx} was applied and the $\T$ matrix defined in \eqref{eq:TheResult} is obtained by solving the linear system of equations using a Moore-Penrose pseudoinverse \citep{horn2012matrix}. In the following paragraph, we discuss some of the results obtained in this study. 

The heat map on  {Fig. \ref{fig:VarCovara}} shows the coefficient of variation (standard deviation divided by mean) in consumer price in each year caused by the uncertainty in parameters as mentioned in subsection \ref{sub:covariance_matrix_calibration}. We notice that this large uncertainty in production costs of US7 caused relatively small uncertainties in the consumer price. This is partially due to the availability of resources in US8 and CAW to compensate for the large uncertainty in US7. The fact that it is actually US8 and CAW that compensate for this uncertainty is known by looking at the covariance plot on  {Fig. \ref{fig:VarCovarb}} which shows large correlation between US7 and US8 and also between US7 and CAW.

\begin{figure}[t]
\centering
\begin{subfigure}[b]{0.45\textwidth}
	\includegraphics[width = \textwidth]{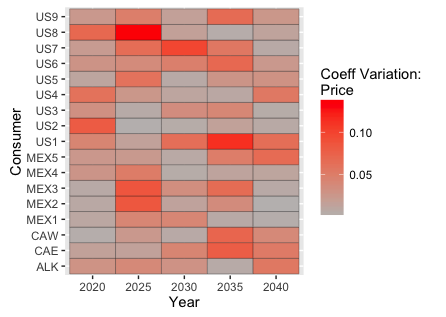}
	\caption{Coefficient of variation in Price}\label{fig:VarCovara}
\end{subfigure}
\begin{subfigure}[b]{0.45\textwidth}
	\includegraphics[width = \textwidth]{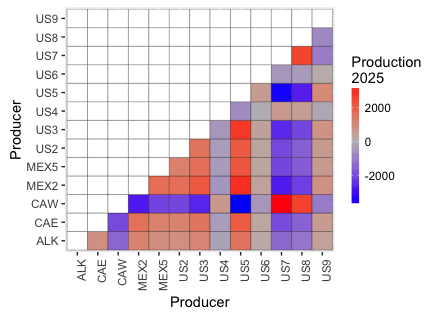}
	\caption{Covariance of Produced quantity}\label{fig:VarCovarb}
\end{subfigure}
	\caption{Covariance results}\label{fig:VarCovar}
\end{figure}

\begin{figure}[t]
\centering
\begin{subfigure}[b]{0.45\textwidth}
	\includegraphics[width = \textwidth]{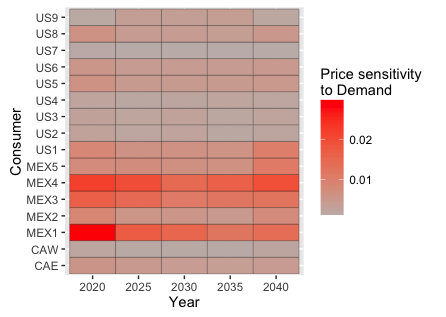}
	\caption{Price sensitivity to demand}\label{fig:tornadoa}
\end{subfigure}
\begin{subfigure}[b]{0.45\textwidth}
	\includegraphics[width = \textwidth]{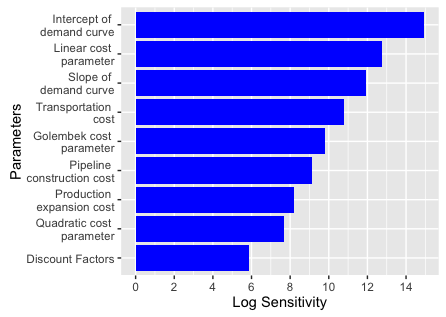} 
	\caption{Parameter sensitivity comparison}\label{fig:tornadob}
\end{subfigure}\caption{Sensitivity results}\label{fig:tornado}
\end{figure}

Fig. \ref{fig:tornado} shows the sensitivity of the solution to various input parameters. The graph on  {Fig. \ref{fig:tornadoa}} shows the sum total change in uncertainty in price for a 1\% fluctuation in the demand curve of  consumers. We notice that the price is particularly sensitive to changes in demand in Mexico.{ This reflects the increasing concern about growing exports (both LNG and pipeline) that are likely to result in higher consumer prices in the U.S. } We also note that fluctuations in demand at nodes where production facilities are not available (MEX1, MEX3, MEX4) cause greater uncertainty in price. This is because, for regions with a production facility in the same node, the production facility produces more to cater the demand at that node and there is little effect in the flows and in the prices at other nodes.{ This is also contingent to sufficient pipeline capacity. Larger changes in demand for regions with limited pupeline capacity (e.g. MEX1) may result in major changes in price.} However a perturbation to the demand at a node with no production unit causes the flows to alter to have its demand catered. This affects natural gas availability elsewhere and causes larger fluctuations in price. The tornado plot on  {Fig. \ref{fig:tornadob}} sorts the parameters in decreasing order of their effect on the uncertainty of the solution.

{The plot Fig. \ref{fig:tornadob} shows the total change of the equilibrium if a parameter (e.g., the demand intercept) is shifted by 1\% from its original value. Note that the results are plotted in logarithmic scale and are sorted in decreasing order. In general, our results suggest that parameters of consumers and producers play a major role on the equilibrium and hence a small perturbation have a large effect on the solution. In particular, the intercept and slope of the demand curve and the linear cost parameter are the three most significant parameters. Interestingly, the expansion cost parameters (pipeline as well as production expansion) have a lower effect on the solution equilibrium. As it was described on Fig \ref{fig:tornadoa}, uncertainties in demand significantly affect regions with no or low production capacities. Fig \ref{fig:tornadob} corroborates that changes to the demand affects the equilibrium the most. The results also indicate that the infrastructure expansion happens independently of changes in expansion cost. Natural gas prices paid by consumers account for cost expansions and transportation. Therefore, the level of expansion is then driven by changes on demand. Hence, if policy changes need to be implemented in order to, for instance, to increase economic activity or reduce carbon emissions, respectively subsidizes or taxes the downstream or upstream ends of market rather than the mid-stream players to have larger impacts. However, a policy maker who is interested in generating revenue without much impacts on the equilibrium should tax fuel transportation or infrastructure expansion for the greatest benefit.}

\section{Conclusion and Future work} 
\label{sec:conclusion_and_future_work}
In this paper, we developed a method to approximate the covariance of the output of a large-scale nonlinear complementarity problem with random input parameters using first-order approximation methods. We extended this method to general optimization problems with equality constraints. We then developed sensitivity metrics for each of the input parameters quantifying their contribution to the uncertainty in the output. We used these tools to understand the covariance in the equilibrium of the North American natural gas Market. The method gave insights into how production, consumption, pipeline flows, prices would vary due to large uncertainties. While the variances identified the regions that are affected the most, the covariance gave information about whether the quantity will increase or decrease due to perturbation in the input. We also obtained results on the sensitivity of price uncertainty to demand uncertainty in various nodes. We then quantified the contribution of each input parameter to the uncertainty in the output. This in turn, helps in identifying the regions that can have large impacts on equilibrium.

We note that the method is particularly useful for large-scale nonlinear complementarity problems with a large number of uncertain parameters, which make Monte-Carlo simulations intractable. It is robust in approximating the solution covariance for small uncertainty in the inputs. It is also good in quantifying the sensitivity of the output (and its variance) to the variance of input parameters. However since all the above are obtained as an approximation based on first-order metrics, there is a compromise in the accuracy if the variances of the input are large. The method works the best for problems involving a large number of decision variables and random parameters with small variance.

We foresee expanding this work by using progressively higher order terms of the Taylor series to capture the nonlinearities more efficiently. To ensure computational feasibility, this would typically require us to have stronger assumptions on the sparsity of the Hessian and the higher-order derivatives. This will also require analysis and stronger assumptions about higher-order moments of the random parameters.

\section{Acknowledgements} 
\label{sec:acknowledgements}
The model in this article is based in part on the multi-fuel energy equilibrium model MultiMod \cite{huppmann2014market}. The MultiMod was developed by Dr. Daniel Huppmann at DIW Berlin as part of the RESOURCES project, in collaboration with Dr. Ruud Egging (NTNU, Trondheim), Dr. Franziska Holz (DIW Berlin) and others (see \url{http://diw.de/multimod}). We are grateful to the original developers of MultiMod for sharing the mathematical implementation, which we further extended as part of this work.

The authors would also like to thank Dr. Donniell Fishkind, Department of Applied Mathematics and Statistics, Johns Hopkins University, Dr. Michael Ferris, Department of Computer Sciences, University of Wisconsin-Madison and the participants of TAI conference 2016, MOPTA 2016 and INFORMS 2016 for their valuable comments and discussions.

\sriAdd{The authors would also like to thank the two anonymous reviewers whose comments and suggestions improved this paper.}
\section{Proofs to certain lemmas and propositions} 
\label{sec:proofs_to_certain_lemmas_and_propositions}
\begin{proof}[Proof of Lemma 3] To show this, we first prove that every element in $\mathbb{K}'$ indeed is in $\mathbb{K}^*$. And then we prove for every element $\x \not\in \mathbb{K}'$, there exists some $\mathbf{v} \in \mathbb{K}$ such that $\mathbf{v}^T\x < 0$.\\
Consider an arbitrary $\x$ in $\mathbb{K}'$.
\begin{align}
\mathbf{v}^T\x \quad = \quad \sum_{i=1}^n \mathbf{v}_i\x_i \quad = \quad \sum_{i\in \mathcal{I}}\mathbf{v}_i\x_i + \sum_{i\not\in \mathcal{I}}\mathbf{v}_i\x_i \quad \geq \quad 0
\end{align}
where the final inequality follows from the fact that each term in the first summation is individually non-negative and each term in the second summation is 0. Thus we have $\mathbb{K}' \subseteq \mathbb{K}^*$.\\
Now to show the reverse containment, suppose there is $\x \in \mathbb{R}^n; \x\not\in \mathbb{K}'$. This means, we either have
\begin{enumerate}
	\item at least one index $j\in \mathcal{I}$ such that $\x_j <0$ or
	\item at least one index $j\not\in \mathcal{I}$ such that $\x_j \not = 0$
\end{enumerate}
Now,
\begin{align}
\mathbf{v}^T\x \quad = \quad \sum_{i=1}^n \mathbf{v}_i\x_i \quad = \quad \mathbf{v}_j\x_j + \sum_{i \not= j}\mathbf{v}_i\x_i 
\end{align}
In the first case, choose $\mathbf{v}$ such that $[\mathbf{v}]_i = 0$ for $i\not = j$ and $\mathbf{v}_j = 1$. Clearly $\mathbf{v}\in \mathbb{K}$ and for this choice of $\mathbf{v}$, the above sum is negative, showing $\x\not\in \mathbb{K}^*$. In the second case, choose $\mathbf{v}$ such that $[\mathbf{v}]_i = 0$ for $i\not = j$ and $\mathbf{v}_j = -\sgn(\x_j)$. Clearly $\mathbf{v}\in \mathbb{K}$ and for this choice of $\mathbf{v}$, the above sum is negative, showing $\x\not\in \mathbb{K}^*$. Thus we show $\left( \mathbb{K}' \right) ^c \subseteq \left( \mathbb{K}^* \right)^c$, which implies the reverse containment and completes the proof.
\end{proof}

\begin{proof}[Proof of Proposition 5]
	Since $\xstar\equiv\xstar(\randa)$ solves the problem, following from the requirement that $\cF(\xstar)\in \mathbb{K}^*$ and lemma 3, if $i\not\in \mathcal{I}$, $\cFi_i(\xstar;\randa) = 0$. \\
For $i\in \mathcal{I}$, $\xstar \in \mathbb{K} \Rightarrow \xstar_i \geq 0$ and $\cF(\xstar)\in \mathbb{K}^* \Rightarrow \cFi_i(\xstar)\geq 0$. Also from the requirement $\xstar^T\cF(\xstar) = 0$, one of the above two quantities should vanish for each $i\in \mathcal{I}$. But C-functions are precisely functions that vanish when both their arguments are non-negative and of them equal zero. So $\mer_i(\xstar,\cFi_i(\xstar)) =0$. \\
Thus each coordinate of $\Mer$ is individually zero, which makes $\f(\xstar)$ vanish, which is the smallest value $\f$ can take. Thus $\xstar$ is a global minimum of $\f$.
\end{proof}

\begin{proof}[Proof of Proposition 6]
	Since a solution exists for the NCP, we know by proposition 5 that the minimum value $\f$ can take is 0. Suppose we have $\xstar\in \mathbb{R}^n$ such that $\f(\xstar;\randa) = 0$. Since $\f$ is sum of squares, this can happen only if each of the individual terms are zero. This means for $i\not\in \mathcal{I}$, $\cFi_i(\xstar) = 0$. \\
Now since $\mer_i(\x_i,\cFi_i(\xstar)) = 0$ for $i\in \mathcal{I}$, we know $\cFi_i(\xstar) \geq 0$. This combined with the previous point implies $\cF(\xstar)\in \mathbb{K}^*$.\\
Also from the fact that $\mer_i(\xstar_i;\cFi_i(\xstar;\randa)) = 0$ for $i\in \mathcal{I}$, we know that $\xstar \in \mathbb{K}$. It also implies that $\xstar_i\cFi_i(\xstar) = 0$ for $i\in \mathcal{I}$. Thus
\begin{align}
\xstar^T\cF(\xstar;\randa) \quad &= \quad \sum_{i=1}^n \xstar_i\cFi_i\\
\quad &= \quad \sum_{i\in \mathcal{I}} \xstar_i\cFi_i + \sum_{i\not\in \mathcal{I}} \xstar_i\cFi_i\\
\quad &= \quad 0 + 0 \quad = \quad 0
\end{align}
This implies $\xstar(\randa) \perp \cF(\xstar;\randa)$ and $\xstar(\randa)$ solves the complementarity problem.
\end{proof}

\begin{proof}[Proof of Corollary 8]
	The set $O = \left\{ (a,b):\mer_{min}(a,b)=0 \right\} \setminus \left\{ (0,0) \right\}$ is the positive coordinate axes except the origin. Rewriting this C-function as
\begin{align}
\mer_{min}(a,b) \quad &= \quad \left \{
    \begin{array}{ll}
        a&\mbox{ if }a<b\\
        b&\mbox{ otherwise}
    \end{array}
\right .
\end{align}
we see that the second derivative of $\mer_{min}$ vanishes at $O$. Also we observe that
\begin{align}
\lim_{(a,b)\rightarrow(0,0)} \mer^2(a,b) \frac{\partial^2 \mer(a,b) }{\partial a\partial b} \quad &= \quad 0
\end{align}
since the second derivative is 0 everywhere except along the line $a=b$.
\end{proof}

\begin{proof}[Proof of Corollary 9]
	For $\mer = \mer_{FB}$, we have assumption 1 of proposition 7 satisfied by \cite{facchinei2007finite}. For assumption 2,
\begin{align}
\lim_{(a,b)\rightarrow(0,0)} \mer^2(a,b) \frac{\partial^2 \mer(a,b) }{\partial a\partial b} \quad &= \quad\lim_{(a,b)\rightarrow(0,0)}  \left( \sqrt{a^2+b^2}-a-b \right)^2\frac{ab}{\left( \sqrt{a^2+b^2} \right) ^3}\\
\quad &= \quad 0
\end{align}
Thus $\f$ is twice continuously differentiable at its zeros. The twice continuous differentiability elsewhere follows directly from the fact that $\mer_{FB}$ is twice continuously differentiable everywhere except at the origin. This ensures that all the terms in the derivative of the sum of squares exist and are finite.
\end{proof}

\begin{proof}[Proof of Proposition 14]
	By definition, for some admissible $d$,
\begin{align}
\f(\x+\delta e_d) \quad &= \quad \f(\x) + \delta Ge_d + o(\delta^2)\\
\Rightarrow [\f(\x+\delta e_d)]_i \quad &= \quad [\f(\x)]_i + \delta G_{id} + o(\delta^2)\label{eq:sens-lin}\\
\|\f(\x+\delta e_d)\|_2 \quad &\leq \quad \|\f(\x)\|_2 + \|\delta G_{.d}\|_2 + \|o(\delta^2)\|_2\\
\quad &= \quad \|\f(\x)\|_2 + \delta\sqrt{\left( \sum_{i=1}^n G_{id}^2 \right) } + o(\delta^2)\label{eq:sens-UB}
\end{align}
where $G_{.d}$ is the $d$-th column of $G$.  Also we have from \eqref{eq:sens-lin} for sufficiently small $\delta$,
\begin{align}
\|\f(\x+\delta e_d)\|_2 \quad &\geq \quad \|\f(\x)\|_2 - \|\delta G_{.d}\|_2 + \|o(\delta^2)\|_2\\
\quad &= \quad \|\f(\x)\|_2 - \delta\sqrt{\left( \sum_{i=1}^n G_{id}^2 \right) } + o(\delta^2)\label{eq:sens-LB}
\end{align}
\end{proof}

\begin{proof}[Proof of Corollary 17]
	For the problem to be well-defined, let $G \in \mathbb{R}^{n\times n}$ and $\randa \in \mathbb{R}^n$. This makes $\nabla^2_{\x\randa}\f(\x;\randa) \in \mathbb{R}^{n\times n}$.
\begin{align}
	\gradxf (\x;\randa) \quad &= \quad G\x + \randa(\randomxi)\\
	\hessxf(\x;\randa) \quad &= \quad G\\
	[\nabla^2_{\x\randa}\f(\x;\randa)]_{ij} \quad &= \quad I
\end{align}
Due to absence of terms dependent on $\x$ in the last two equations, we have an exact equation,
\begin{align}
G \Delta\x \quad &= \quad \perturb
\end{align}
Due to the exactness of the above equation, we have
\begin{align}
\T \quad &= \quad G^{-1}\\
\cov(\Delta\x) \quad &= \quad \T\cov(\perturb)\T^T
\end{align}
with no error.
\end{proof}


\section{Natural Gas Market - Complementarity Formulation} 
\label{sec:natural_gas_market_complementarity_formulation}
In this formulation, we assume we have a set of suppliers $\P$, consumers $\C$ and a pipeline operator. The players are located in a set of nodes $\N$, and some of them are connected by pipelines  $\A$.\\
Let also say that $\P_n\subseteq \P$, $\C_n\subseteq\C$ are located in node $ n\in \N$. Let $\A_n$ be the pipelines connected to node $n$. The symbols used here are explained in Table~\ref{tab:Sets}, \ref{tab:Symbols} and \ref{tab:Param}. Most of the analysis closely follow \cite{Feijoo} and \cite{Egging2016}. Random parameters are denoted by an $(\randomxi)$ beside them. The implementation of this problem is made available in \code{https://github.com/ssriram1992/Stoch\_Aprx\_cov}.

\begin{figure}
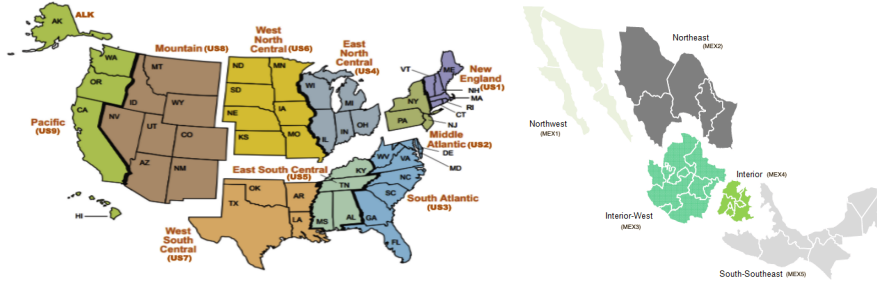

	\includegraphics[width=0.57\textwidth]{US_division.png}
	\includegraphics[width=0.4\textwidth]{Mex_division.png}
	\caption{Regional disaggregation of United States and Mexico. Source: \cite{outlook2015us} and U.S. Energy Information Administration \url{http://www.eia.gov/todayinenergy/detail.php?id=16471}}
	\label{fig:Division}
\end{figure}
\subsection{Producer's problem} 
\begin{align}
 \Max \sum_{\Y}\df &\left\lbrace \sum_{\C} \QpcyC\piC -\costP\left(\QpyP,\CapP\right)\nonumber\right.\\
&\left.\quad -\piXP\Xp -\sum_{\Ano}\piA\Qpay \right \rbrace \label{eq:Prod}
\end{align}
subject to
\begin{subequations}
	\label{eq:ProdCon}
\begin{align}
\QpcyC,\QpyP,\Qpay \quad &\geq \quad 0 \nonumber\\
\Xp,\CapP \quad &\geq \quad 0 \nonumber\\
\QpyP \quad &\leq \quad \avl\CapP& \left( \dab \right) \\
\CapP \quad &= \quad \Qpo+\sum_{i=1}^y\Xp[i]& \left( \dac \right) \\
\sum_{\C_n}\QpcyC + \sum_{\Ano}\Qpay \quad &= \quad \QpyP(1-\lossP) \nonumber\\
\quad & \quad  \qquad+\sum_{\Ani}\Qpay(1-\lossA)& \left( \dad \right)
\end{align}
\end{subequations}
where
\begin{align}
	\costP(.) \quad &= \quad (\l+\g)\QpyP + \q\QpyP^2  \nonumber\\
	\quad & \quad \qquad +\g(\CapP-\QpyP)\log \left( 1- \frac{\QpyP}{\CapP} \right)
\end{align}

\begin{table}
\caption{Sets}\label{tab:Sets}
\small
\begin{tabular}{lcl|cl}
\hline
& \textbf{Set}& \textbf{Explanation}& \textbf{Set}& \textbf{Explanation}\\
\hline
&$\P$& Set of suppliers&$\A$& Set of pipeline connections(arcs)\\
&$\C$& Set of consumers&$\Ano$& Set of arcs from node $n$ on which natural gas flows out\\
&$\N$& Set of nodes &$\Ani$& Set of arcs from node $n$ on which natural gas flows in\\
&$\Y$& Set of periods\\
\hline
\end{tabular}
\end{table}

\begin{table}
\caption{Symbols - Variables}\label{tab:Symbols}
\small
\begin{tabular}{lcl}
\hline
& \textbf{Symbol}& \textbf{Explanation}\\
\hline
\textbf{Quantities}
&$\QpcyC$& Quantity produced by $p$ in $n$ to send to $c$ in year $y$\\
&$\QpyP$& Total quantity produced by $p$ in year $y$\\
&$\Qpay$& Total quantity $p$ choses to send by arc $a$ in year $y$\\
&$\Qa$& Total quantity sent by $a$ during year $y$\\
\textbf{Prices}&$\piC$& Unit price paid by consumer $C$ in year $Y$\\
&$\piA$& Unit price of sending natural gas through $a$ during year $y$\\
\textbf{Capacity}&$\Xp$& Production expansion in year $y$ for supplier $p$\\
&$\Xa$& Transportation capacity expansion in year $y$ for arc $a$\\
&$\CapP$& Production capacity for supplier $p$ in year $y$\\
&$\CapA$& Transportation capacity for arc $a$ in year $y$\\
\hline
\end{tabular}
\end{table}

\begin{table}
\caption{Symbols - Parameters}\label{tab:Param}
\begin{center}
\begin{tabular}{lcl}
\hline
& \textbf{Symbol}& \textbf{Explanation}\\
\hline
\textbf{Quantities}&$\Qpo$& Initial capacity of production for supplier $p$\\
&$\Qao$& Initial capacity of transportation for pipeline $a$\\
\textbf{Prices}&$\piXP$& Price of capacity expansion for supplier $p$\\
&$\piXA$& Price of capacity expansion for transportation arc $a$\\
\textbf{Losses}&$\lossP$& Percentage loss in production by supplier $p$ in year $y$\\
&$\lossA$& Percentage loss in transportation via arc $a$ in year $y$\\
&$\avl$& Availability fraction of the production capacity \\
\textbf{Consumer}&$\DemI$& Intercept of the demand curve for consumer $c$ in year $y$\\
&$\DemS$& Slope of the demand curve for consumer $c$ in year $y$\\
&$\df$& Discount Factor for year $y$\\
\hline
\end{tabular}
\end{center}
\end{table}

\subsection{Pipeline operator's problem} 
\begin{align}
\Max\sum_{\Y}&\df \left\{ \sum_{\A} \Qa \left( \piA-\costA \right) - \piXA\Xa  \right\}\label{eq:pipe}
\end{align}
{subject to }
\begin{subequations}
	\label{eq:pipeCon}
\begin{align}
\Qa,\Xa,\CapA \quad &\geq \quad 0 \nonumber	\\
\Qa \quad &\leq \quad \CapA&\left( \dah \right)\\
\CapA \quad &= \quad \Qao + \sum_{i=1}^y \Xa[i]&\left( \dai \right)
\end{align}
\end{subequations}

\subsection{Consumer} 
\begin{align}
\piC \quad &= \quad \DemI + \DemS\sum_\P \QpcyC&\left( \piC \right)
\end{align}
It can be shown that the above said optimization problems are all convex with non-empty interior. Hence the Karush-Kuhn Tucker conditions (KKT conditions) are necessary and sufficient for optimality. The KKT conditions are presented below and they form the equations for the complementarity problem along with the constraints above.

\subsection{KKT to Producer's problem}
\begin{subequations}
	\begin{align}
		-\df\piC + \dad  \quad &\geq \quad 0&\left( \QpcyC \right)\\
		\df\piXP - \sum_{i=1}^y\dac[{i}]\quad &\geq \quad 0&\left( \Xp \right)\\
		\df\piA + \left( \mathbb{I}_{a\in \Ano}  - \mathbb{I}_{a\in\Ani} (1-\lossA)  \right) \dad \quad &\geq \quad 0&\left( \Qpay \right)\\
		\df\frac{\partial \costP}{\partial \QpyP} +\dab - \dad(1-\lossP) \quad &\geq \quad 0& \left( \QpyP \right)\\
		\df\frac{\partial \costP}{\partial \CapP} + \avl\dac -\dab\quad &\geq \quad 0& \left( \CapP \right)
\end{align}
\end{subequations}

\subsection{KKT to Pipeline operator's problem} 
\begin{subequations}
	\begin{align}
		-\df\piA + \costA + \dah\quad &\geq \quad 0& \left( \Qa \right)\\
		\df\piXA- \sum_{i=1}^y \dai[i] \quad &\geq \quad 0&\left( \Xa \right) \\
	\dai - \dah \quad &\geq \quad 0&\left( \CapA \right)
	\end{align}
\end{subequations}

\subsection{Market clearing condition} 
\begin{align}
\Qa \quad &= \quad \sum_{\P}\Qpay&(\piA)
\end{align}

\section{N-dimensional Sparse array implementation} 
\label{sec:n_dimensional_sparse_array_implementation}
A general purpose Python class has been implemented to handle a sparse \emph{ndarray} object. The class is a generalization of the scipy class \emph{coo\_matrix} which stores the array coordinates of each non-zero element in the array. We now describe the details of the implementation. A continuously updated version of the class can be found at \url{https://github.com/ssriram1992/ndsparse}.

\subsection{Initialization} 
\label{ndA:initialization}
The n-dimensional sparse array (\code{coo\_array}) can be initialized by any of the following methods.
\begin{itemize}
	\item \emph{A \code{tuple}}, which initializes the sparse array of the shape mentioned in the \code{tuple} and with zeros everywhere.
	\item \emph{A dense \code{ndarray}} which will be converted and stored as a \code{coo\_array}.
	\item \emph{A \code{matrix} of positions and a 1 dimensional \code{array} of values} where the matrix contains the positions of the non-zero elements and the vector containing the non-zero values of those positions. In this case the shape of the \code{coo\_array} would be the smallest \code{ndarray} that can store all the elements given. Optionally a \code{tuple} containing the shape of the \code{ndarray} can be given explicitly.
	\item \emph{Another \code{coo\_array}} whose copy is to be created.
\end{itemize}

\subsection{Methods} 
\label{sub:methods}
The following methods and attributes are available in the \code{coo\_array}.
\begin{itemize}
	\item \code{print(coo\_array)} will result in printing the location of each of the non-zero elements of the array and their values.
	\item \code{coo\_array.flush(tol = 1e-5)} will result in freeing the space used in storing any zero-elements or elements lesser than the tolerance, \code{tol}. Such numbers typically arise out arithmetic operations on \code{coo\_array} or poor initialization.
	\item \code{coo\_array.size()} returns the number of non-zero elements in the \code{coo\_array}.
	\item \code{coo\_array.shape} returns the shape of the underlying dense matrix.
	\item \code{coo\_array.add\_entry(posn,val)} and \code{coo\_array.set\_entry(posn,val)} both add a new non-zero element with the given  value at the given position. The difference however is that \code{set\_entry()} checks if a non-zero value already exists at the mentioned position, and if yes, overwrites it. This search makes \code{set\_entry()} slower compared to \code{add\_entry()} which assumes that the previous value or the position is zero. Thus \code{add\_entry()} could potentially cause duplicates and ambiguity, if an illegal input is given. However in case the input is ensured to be legal, \code{add\_entry()} is much faster.
	\item \code{coo\_array.get\_entry(posn)} returns the value at the given position.
	\item \code{coo\_array.swapaxes(axis1,axis2)} is a higher dimensional generalization of matrix transposes where the dimensions that have to swapped can be chosen.
	\item \code{coo\_array.remove\_duplicate\_at(posn,func=0)} checks if there are multiple values defined for a single position in the sparse array. If yes, they are replaced by a single entry containing the scalar valued defined by \code{func} or passes them to a function defined in \code{func} and stores the returned value. Passing a function for the argument \code{func} is incredibly useful in performing arithmetic operations on \code{coo\_array}.
	\item \code{coo\_array.todense()} returns a dense version of the \code{coo\_array}.
	\item \code{coo\_array.iterate()} returns an iterable over the non-zero positions and values in the \code{coo\_array}.
\end{itemize}
The above class is used extensively to handle high-dimensional sparse arrays resulting out of variables containing pipelines, viz., $\Qpay, \Qa, \Qpo, \Qao,  \piA,  \Xa, \dah, \dai$ and parameters with pipelines, viz., $\CapA, \piXA,\lossA, \costA$.



\section{Computational Experiments}
\subsection{Problem Setup}
\sriAdd{The single-node single-product oligopoly mentioned in Section 5.1 is used for the computational experiments. Experiments were done with varying the number of players and to avoid any advantage due to symmetry, a different value was used for the cost of production for each of the $n$ players. The values used for the computational study are given below.}
\begin{align}
a \quad&=\quad 500\\
b \quad&=\quad -0.5\\
\mathbb{E}[c_i] \quad&=\quad 100 + 3i&(i=1,\,2,\,\ldots,\,n)\\
Var(c_i) \quad&=\quad 1&(i=1,\,2,\,\ldots,\,n)
\end{align}
\begin{figure}[t]
    \centering
    \includegraphics[width=0.7\textwidth]{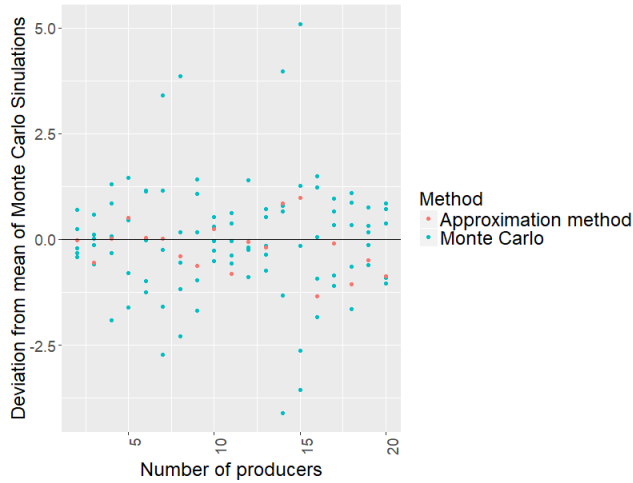}
    \caption{Error comparison}\label{fig:MCRace}
\end{figure}
\sriAdd{For the time tests, the comparison was between a Monte-Carlo simulation involving $0.1\times 2^n$ random samples and the approximation method. \\}
\sriAdd{For the error comparison, the Monte-Carlo simulation was done using $\max(100, 0.1\times2^n)$ samples. The process was repeated five times to show the relative differences between Monte-Carlo solutions purely due to the randomness in sampling. Having obtained the covariance matrix of the solution, both by Monte-Carlo simulation as well as the first-order approximation, we compute the trace of the covariance matrix. As said in Section 3, this trace corresponds to the invariant total uncertainty in the solution. Each cycle of the Monte-Carlo simulation gives one value of the said quantity. Each of these points is shown as a blue dot in the figure. The mean of all these values is considered as the zero line. The red dot corresponds to the value obtained by the approximation method. We hence demonstrate the relative accuracy of our method with respect to Monte-Carlo simulations. We also note that the test involves complementarity problems with exclusively strong complementarity terms which happens when the number of players is at most 15, and problems with both strong and weak complementarity terms, which happens when the number of players exceed 15. In either case, the error is comparable to that obtained via Monte-Carlo simulation.}

\sriAdd{For the purposes of this experiment, the first-order approximation method and the sampling process were implemented in Python 2.7 while the complementarity problems were solved using PATH algorithm accessed through Python-GAMS api. }
	
\section*{References}
\bibliographystyle{apa} 
\bibliography{All_ref,ref_only} 

\end{document}